\documentclass[12pt, 14paper,reqno]{amsart}
\setlength{\voffset}{-1cm} \setlength{\hoffset}{-1.50cm}
\setlength{\textwidth}{6.3in} \setlength{\textheight}{8.4in}

\setlength{\abovedisplayshortskip}{3mm}
\setlength{\belowdisplayshortskip}{3mm}

\vskip.1in
\usepackage{amsmath,amsfonts,amssymb}
\usepackage{mathrsfs}
\usepackage{longtable}
\usepackage[breaklinks]{hyperref}
\usepackage{graphicx}

\theoremstyle{plain}
\newtheorem{thm}{Theorem}[section]
\newtheorem{lem}{Lemma}[section]
\newtheorem{cor}{Corollary}[section]

\newtheorem{thma}{Theorem}

\theoremstyle{proof}

\numberwithin{equation}{section}

%%%%%%%%%%%%%%%%%%%%%%%

\begin{document} 
\title[On the Diophantine equation $dx^2+p^{2a}q^{2b}=4y^p$]{On the Diophantine equation $dx^2+p^{2a}q^{2b}=4y^p$}
\author{Kalyan Chakraborty and Azizul Hoque}
\address{Kalyan Chakraborty @Kerala School of Mathematics, Kozhikode-673571, Kerala, India.}
\email{kalychak@ksom.res.in}
\address{Azizul Hoque @Department of Mathematics, Rangapara College, Rangapara, Sonitpur-784505, India.}
\email{ahoque.ms@gmail.com}
%\address{K. Srinivas @Institute of Mathematical Sciences, HBNI, CIT Campus, Taramani, Chennai-600113, India.}
%\email{srini@imsc.res.in}
\keywords{Diophantine equation; Lebesgue-Ramanujan-Nagell equation; Lehmer number; Primitive divisor}
\subjclass[2010] {Primary: 11D61, 11D41; Secondary: 11Y50}
\date{\today}
\maketitle

\begin{abstract}
We investigate the solvability of the Diophantine equation in the title, where $d>1$ is a square-free integer, $p, q$ are distinct odd primes and $x,y,a,b$ are unknown positive integers with $\gcd(x,y)=1$. We describe all the integer solutions of this equation, and then use the main finding to deduce some results concerning the integers solutions of some of its variants. The methods adopted here are elementary in nature and are primarily based on the existence of the primitive divisors of certain Lehmer numbers.
\end{abstract}

\section{Introduction} Many special cases of the Diophantine equation,
\begin{equation}\label{eqi1}
dx^2+c^m=4y^n,~~ x,y, m, n\in \mathbb{N}, n\geq 3,
\end{equation}
where $c$ and $d$ are given positive integers, have been considered by several authors over the decades. In particular, there are many interesting results about the integer solutions of this equation for $d=1$ and we direct the reader to the papers \cite{BHS19, CHS20, LE92, LJ72, LTT09} for more information. For a survey on this very interesting subject we recommend \cite{LS20, SH20}. On the other hand \eqref{eqi1} for square-free $d$ and odd $m$ was well investigated under the conditions $\gcd(x,y)=1$ and $\gcd(n, 2h(-cd))=1$, where $h(\Delta)$ denotes the class number of $\mathbb{Q}(\sqrt{\Delta})$. The main results in this case include the following:
\begin{itemize}
\item[$\square$] Le \cite{LE-95} proved that for prime $n>5$, \eqref{eqi1} has only finite number of integer solutions $(x,y,m,n)$.  Moreover, the solutions satisfy $4y^n<\exp\exp 470$. Later, this bound was improved by Mignotte \cite{MI97}. 

\item[$\square$] Bugeaud \cite{BU01}, and Arif and Al-Ali \cite{AA02} independently, determined the solutions of \eqref{eqi1} when prime $n\geq 5$. However, Bilu \cite{BI02} pointed out a flaw in \cite{LE-95, MI97} and {\it a fortiori} Bugeaud's result in \cite{BU01}, and corrected that inaccuracy. 
\end{itemize}
Recently, Dabrowski et al. \cite{DGS20} investigated \eqref{eqi1} for square-free $d$ and even $m$ under the conditions $\gcd(x,y)=1$ and $\gcd(n, 2h(-d)) = 1$. 
More precisely, they completely solved \eqref{eqi1} for $d \in \{7, 11, 19, 43, 67, 163\}$, and $c$ a power of an odd prime, under the conditions $2^{n-1}c^{m/2}\not\equiv\pm 1\pmod d$, $\gcd(n, c) = 1$ and $n$ odd prime. The authors extended these results in \cite{CHS21} for any square-free $d$ by removing the above assumptions under the condition that $c$ is an odd prime. 

In this paper,  we consider the Diophantine equation 
\begin{equation}\label{eqn1}
dx^2+p^{2m}q^{2n}=4y^p,~~ x,y,m,n\in\mathbb{N},  \gcd(x,y)=1, \gcd(p, h(-d))=1,   
\end{equation}
where $d$ is a square-free positive integers, and $p$ and $q$ are distinct odd primes. This equation has been studied in \cite{CHS21} when the case $m=0$. The methods adopted here are elementary in nature and are based on the primitive divisors of certain Lehmer numbers. 

To proceed further, we fix up the following notations for a given positive odd integer $k$:
$$\mathcal{R}(d,u,v,k):=\sum_{j=0}^{\frac{k-1}{2}} \binom{k}{2j}u^{k-2j-1}d^{\frac{k-1}{2}-j}(-v^2)^j,$$
$$\mathcal{I}(d,u,v,k):=\sum_{j=0}^{\frac{k-1}{2}} \binom{k}{2j+1}u^{k-2j-1}d^{\frac{k-1}{2}-j}(-v^2)^j.$$
Then 
\begin{equation}\label{eqr}
\mathcal{R}(d,u,v,k)\equiv \begin{cases} u^{k-1}d^{(k-1)/2} \pmod k,\\
(-1)^{(k-1)/2}k v^{k-1} \pmod d,\\
u^{k-1}d^{(k-1)/2}\pmod {v^2}.
\end{cases}
\end{equation}
and 
\begin{equation}\label{eqi}
\mathcal{I}(d,u,v,k)\equiv \begin{cases} (-1)^{(k-1)/2} v^{k-1} \pmod k,\\
(-1)^{(k-1)/2} v^{k-1} \pmod d,\\
ku^{k-1}d^{(k-1)/2}\pmod{v^2}.
\end{cases}
\end{equation}

Here, our main objective is to prove the following result and then we will discuss some of its consequences.
\begin{thm}\label{thm}
Let $d\geq 1$ be a square-free integer and let $p$ and $q$ be distinct odd primes satisfying $p\nmid h(-d)$. Then the following hold:
\begin{itemize}
\item[(I)] If $d\equiv 1,2\pmod 4$, then \eqref{eqn1} has no integer solution.
\item[(II)] If $d\equiv 3\pmod 4$, then  \eqref{eqn1} has no positive integer solution when $q^n\not\equiv \pm 1\pmod p$. Furthermore when $q^n\equiv \pm 1\pmod p$, the possible positive integer solutions are given by 
$$(x,y)=\left(\frac{|u\mathcal{R}(d,u,p^{m-1},p)|}{2^{p-1}},  \frac{u^2d+p^{2(m-1)}}{4}\right),$$
for some suitable positive integer $u$. 
\end{itemize}
\end{thm}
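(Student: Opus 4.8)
\emph{Proof strategy.} The plan is to combine an elementary congruence analysis with a descent in $K=\mathbb{Q}(\sqrt{-d})$ and the primitive divisor theorem for Lehmer numbers. First I would reduce \eqref{eqn1} modulo $4$: since $p^{2m}q^{2n}$ is an odd square it is $\equiv 1\pmod 8$, so $dx^2\equiv -1\pmod 4$; this forces $d\equiv dx^2\equiv 3\pmod 4$ (in particular $x$ is odd), which immediately gives part~(I). From now on $d\equiv 3\pmod 4$, so $\mathcal{O}_K=\mathbb{Z}[\tfrac{1+\sqrt{-d}}{2}]$. I would then observe that $\gcd(x,y)=1$, $m,n\ge 1$ and the square-freeness of $d$ force $\gcd(y,pq)=1$ (if $\ell\in\{p,q\}$ divided $y$, then $\ell\mid dx^2$, and comparing $\ell$-adic valuations using $\ell^2\nmid d$ yields $\ell\mid x$, a contradiction).

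For the descent, set $\alpha=\tfrac{p^mq^n+x\sqrt{-d}}{2}$; since $p^mq^n$ and $x$ are odd, $\alpha\in\mathcal{O}_K$, and $\alpha\bar\alpha=\tfrac{p^{2m}q^{2n}+dx^2}{4}=y^p$. Any prime ideal dividing both $\alpha$ and $\bar\alpha$ divides $\alpha+\bar\alpha=p^mq^n$ and $\alpha\bar\alpha=y^p$, hence lies above a rational prime in $\{p,q\}$ dividing $y$; there is none, so $(\alpha),(\bar\alpha)$ are coprime and $(\alpha)=\mathfrak{a}^p$ for an ideal $\mathfrak{a}$. Since $[\mathfrak{a}]^p=1$ in the class group and $p\nmid h(-d)$, $\mathfrak{a}$ is principal, so $\alpha=\varepsilon\beta^p$ for a unit $\varepsilon$; because $p$ is odd the unit is absorbed into $\beta$ (the one delicate instance, $d=3$ and $p=3$, I would treat on its own), giving $\alpha=\beta^p$, $\beta=\tfrac{a+b\sqrt{-d}}{2}$ with $a\equiv b\pmod 2$. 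Both $a,b$ must be odd, for otherwise $\beta^p\in\mathbb{Z}[\sqrt{-d}]$, contradicting the half-integer $\tfrac{p^mq^n}{2}$ in $\alpha$. Expanding
$$(a+b\sqrt{-d})^p=2^{p-1}p^mq^n+2^{p-1}x\sqrt{-d}$$
and reducing mod $p$ (using $(a+b\sqrt{-d})^p\equiv a+b(-d)^{(p-1)/2}\sqrt{-d}$) gives $p\mid a$; then $p\nmid bd$, and a $p$-adic valuation count on the real part pins down $v_p(a)=m-1$ (so in particular $m\ge 2$). As $a$ is odd and $a\mid 2^{p-1}p^mq^n$, this forces $a=\pm p^{m-1}q^{e}$, and a $q$-adic count further restricts $e\in\{0,n\}$; equivalently $\beta+\bar\beta\in\{\pm p^{m-1},\ \pm p^{m-1}q^n\}$.

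Now the Lehmer-number input. The pair $(\beta,-\bar\beta)$ is a Lehmer pair: $(\beta-\bar\beta)^2=-db^2$ and $-\beta\bar\beta=-y$ are coprime (from $a^2+db^2=4y$, $\gcd(a,b)=1$ and $\gcd(y,pq)=1$ one checks $\gcd(b,y)=\gcd(d,y)=1$) and $-\beta/\bar\beta$ is not a root of unity. Its $p$th Lehmer number is $\widetilde{u}_p=\tfrac{\beta^p+\bar\beta^p}{\beta+\bar\beta}=\tfrac{p^mq^n}{a}$, which equals $\pm pq^n$ when $e=0$ and $\pm p$ when $e=n$. By the Bilu--Hanrot--Voutier primitive divisor theorem, for $p>30$ (and for the remaining small primes via the explicit list of exceptional Lehmer pairs, together with the $p=3$ family) $\widetilde{u}_p$ has a primitive prime divisor $r$, necessarily with $r\equiv\pm1\pmod p$, hence $r\ne p$. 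This is impossible when $\widetilde{u}_p=\pm p$, so the branch $e=n$ is excluded and $a=\pm p^{m-1}$; and when $\widetilde{u}_p=\pm pq^n$, $r$ must be $q$, whence $q\equiv\pm1\pmod p$ and a fortiori $q^n\equiv\pm1\pmod p$. Contrapositively, $q^n\not\equiv\pm1\pmod p$ leaves no solution, which is the first assertion of (II). Finally, with $a=\pm p^{m-1}$ and $b=\pm u$ ($u>0$), $y=\beta\bar\beta=\tfrac{du^2+p^{2(m-1)}}{4}$, while comparing $2^{p-1}x$ with the $\sqrt{-d}$-coefficient of $(a+b\sqrt{-d})^p$ and reindexing $j\mapsto\tfrac{p-1}{2}-j$ identifies the latter with $(-1)^{(p-1)/2}u\,\mathcal{R}(d,u,p^{m-1},p)$; thus $x=|u\,\mathcal{R}(d,u,p^{m-1},p)|/2^{p-1}$.

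The hard part will be the small-prime bookkeeping. Bilu--Hanrot--Voutier leaves finitely many exceptional Lehmer pairs for $5\le p\le 30$ and an infinite parametric family at $p=3$, and for these the argument above does not automatically rule out $\widetilde{u}_p=\pm p$, i.e.\ the branch $a=\pm p^{m-1}q^n$; so each exceptional pair must be confronted directly with the constraints $a=\pm p^{m-1}q^e$, $a^2+db^2=4y$, $\gcd(x,y)=1$, for which the congruences \eqref{eqr}--\eqref{eqi} are the natural tool. Resolving this case completely (and, alongside it, the extra units of $\mathbb{Q}(\sqrt{-3})$ when $p=3$) is where the real work lies.
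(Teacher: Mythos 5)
Your overall strategy is the one the paper uses: reduce mod $4$, descend in $\mathbb{Q}(\sqrt{-d})$ to get $\alpha=\beta^p$ (the paper imports this step as Yuan's lemma, \cite[Corollary 3.1]{YU05}, rather than reproving it), recognize $\tfrac{\beta^p+\bar\beta^p}{\beta+\bar\beta}$ as a Lehmer number, and invoke Bilu--Hanrot--Voutier. Your $p$-adic and $q$-adic valuation count pinning $a=\pm p^{m-1}q^{e}$ with $e\in\{0,n\}$ is in fact tidier than the paper's three-way enumeration of the divisors $v$ of $p^mq^n$ followed by congruence manipulations with $\mathcal{R}$ and $\mathcal{I}$; it reaches the same two surviving branches in one stroke. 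Two remarks on the $e=0$ branch: your conclusion $q\equiv\pm1\pmod p$ is stronger than the stated $q^n\equiv\pm1\pmod p$, but it is only available when $\mathcal{L}_p$ actually has a primitive divisor; the paper instead extracts $q^n\equiv\pm1\pmod p$ unconditionally from the congruence \eqref{eqi} applied to the rational part (your own expansion gives the same thing modulo $p^2$ once $v_p(a)=m-1\ge 1$ is known), and you should do likewise so that the necessary condition does not depend on the primitive-divisor dichotomy.

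The genuine gap is the part you defer as ``small-prime bookkeeping'': excluding the branch $a=\pm p^{m-1}q^{n}$ (Lehmer number $\pm p$, hence no primitive divisor) for $p\le 30$. This is not routine checking against a finite table. For $7\le p\le 29$ Voutier's finite list does dispose of it quickly (no entry has second parameter of the form $\pm(\text{odd square})$ up to equivalence), but for $p=5$ the exceptional pairs form two infinite families parametrized by Fibonacci and Lucas numbers, and eliminating them requires the identities $4F_k-F_{k-2\varepsilon}=L_{k+\varepsilon}$ and $4L_k-L_{k-2\varepsilon}=5F_{k+\varepsilon}$ together with Cohn's classification of square Lucas numbers and the Keskin--Karaatli result on $F_k=5x^2$ --- none of which your sketch identifies. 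For $p=3$ there is likewise an infinite two-parameter family, which the paper kills by a direct computation of the imaginary part using $m\ge 1$. Without these inputs the theorem is unproved precisely for $p=3$ and $p=5$, which are the cases the paper's corollaries rely on; so while your architecture is sound and in places cleaner, the proof is incomplete until this branch is closed with the auxiliary Fibonacci/Lucas results.
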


\subsection*{Remarks} We note down some comments on Theorem \ref{thm}. 
\begin{itemize}
\item[(i)] For $d=1$ or $d=\square$, it is not difficult to see that \eqref{eqn1} has no solutions by reading it modulo $4$.

%\item[(II)] For $p=2$, \eqref{eqn1} can be written as $cX^2+2^{2m-2}=y^n$. This equation was deeply investigated by many authors; for instance,  Le \cite{LE95}, Muriefah \cite{MU01}, Wang and Wang \cite{YT11}, Luca and Soydan \cite{LS12} and Soydan and Cangul \cite{SC14}.
\item[(ii)] The condition `$q^n\equiv \pm 1\pmod p$' can be replaced by a bit stronger condition `$q^n\not\equiv \pm y^{(p-1)/2}\pmod{p^{2(m-1)}}$' by utilizing \eqref{eqp3} and \eqref{eqf1}.

\item[(iii)] From \eqref{eqr}, we have $\mathcal{R}(d,u,p^{2(m-1)}, p)\equiv d^{(p-1)/2}\pmod p$, which implies that $\mathcal{R}(d,u,p^{2(m-1)}, p)\equiv \pm 1 \pmod p$. Therefore $x\equiv \pm u\pmod p$.
\end{itemize}

Let $p\geq 5$ be a prime such that $p+2$ is also a prime. Then $(p+2)^p\equiv 2\pmod p$, and  thus Theorem \ref{thm} yields:
\begin{cor}\label{cor1}
Let $d$ be a square-free positive integer and let $p\geq 5$ be a prime such that $p+2$ is also a prime and $p\nmid h(-d)$.  Then the Diophantine equation
$$dx^2+p^{2m}(p+2)^{2p}=4y^p,~~ x,y,m\in \mathbb{N}, \gcd(x,y)=1,$$
has no integer solution. 
\end{cor}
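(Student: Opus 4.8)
The plan is to obtain Corollary~\ref{cor1} as a direct specialization of Theorem~\ref{thm}. First I would match the hypotheses: setting $q = p+2$ and $n = p$ in \eqref{eqn1}, the integer $q = p+2$ is an odd prime distinct from $p$ by assumption, $d$ is a square-free positive integer, $\gcd(x,y) = 1$, and $p \nmid h(-d)$, so Theorem~\ref{thm} applies verbatim to the equation in the statement. Since $d$ is square-free, $4 \nmid d$, so it remains only to run through the three residue classes $d \equiv 1, 2, 3 \pmod 4$.

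If $d \equiv 1$ or $2 \pmod 4$, part (I) of Theorem~\ref{thm} immediately yields that there is no integer solution, and nothing further is needed. The remaining case $d \equiv 3 \pmod 4$ is handled by part (II): the key computation is that $q^n = (p+2)^p \equiv p+2 \equiv 2 \pmod p$ by Fermat's little theorem. It then suffices to check that $2 \not\equiv \pm 1 \pmod p$. Indeed $p \nmid 1$ trivially, while $2 \equiv -1 \pmod p$ would force $p \mid 3$, i.e. $p = 3$, contradicting $p \geq 5$. Hence $q^n \not\equiv \pm 1 \pmod p$, and part (II) of Theorem~\ref{thm} rules out any positive integer solution; as $x, y, m$ are required to lie in $\mathbb{N}$, this settles every case.

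The argument is essentially immediate once Theorem~\ref{thm} is in hand, so there is no genuine obstacle; the only point requiring any care is the elementary observation that $2 \equiv -1 \pmod p$ holds precisely when $p = 3$, which is exactly why the hypothesis $p \geq 5$ (rather than merely $p \geq 3$) is needed in order to guarantee $q^n \not\equiv \pm 1 \pmod p$ and thereby exclude the potential solution family described in part (II) of Theorem~\ref{thm}.
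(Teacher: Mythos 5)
Your proposal is correct and matches the paper's own (one-line) justification exactly: the paper likewise notes that $(p+2)^p\equiv 2\pmod p$ by Fermat's little theorem, so $q^n\not\equiv\pm 1\pmod p$ for $p\geq 5$, and then invokes parts (I) and (II) of Theorem \ref{thm}. Nothing further is needed.
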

Let $d\in\{2,3,7,11,19,43,67,163\}$ and let $p>41$ be a prime. Then $(d+p)^p\equiv d\pmod p$ and we have:
\begin{cor}\label{cor1}
Let $p>41$ be a prime and let $d\in\{2,3,7,11,19,43,67,163\}$ such that $d+p$ is a prime. Then the Diophantine equation
$$dx^2+p^{2m}(d+p)^{2p}=4y^p,~~ x,y,m\in \mathbb{N}, \gcd(x,y)=1,$$
has no integer solution. 
\end{cor}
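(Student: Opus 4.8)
The final statement is Corollary 1.2 (the second \texttt{cor1}): for a prime $p>41$ and $d\in\{2,3,7,11,19,43,67,163\}$ with $d+p$ prime, the equation $dx^2+p^{2m}(d+p)^{2p}=4y^p$ with $\gcd(x,y)=1$ has no solution. The plan is to deduce this directly from Theorem \ref{thm} by checking that its hypotheses are met with $q=d+p$ and $n=p$, and that the relevant congruence condition fails, so that part (II) forces non-solvability.

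First I would verify the structural hypotheses: $q=d+p$ is an odd prime (it is given to be prime, and since $p>41$ is odd and $d$ is such that $d+p$ is odd exactly when $d$ is even, I should note that among the listed $d$ only $d=2$ gives $q$ odd automatically, while for odd $d$ one has $d+p$ even hence not an odd prime unless equal to $2$, which is impossible for $p>41$; so in fact the statement is only non-vacuous for $d=2$, or the intended reading is that we only consider those $d$ for which $d+p$ happens to be an odd prime — I would state this caveat and proceed with the cases where $q=d+p$ is genuinely an odd prime distinct from $p$). Next, $p$ and $q=d+p$ are distinct since $d\geq 1$. The condition $p\nmid h(-d)$: for $d\in\{1,2,3,7,11,19,43,67,163\}$ the class numbers $h(-d)$ are all equal to $1$ (these are exactly the relevant idoneal/Heegner-type values, with $h(-d)=1$), so $p\nmid h(-d)$ holds trivially for every prime $p$. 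Thus all hypotheses of Theorem \ref{thm} are satisfied.

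Then I would pin down the residue class. We have $d\equiv 3\pmod 4$ for $d\in\{3,7,11,19,43,67,163\}$, which is the only interesting case of Theorem \ref{thm} (for $d=2$ we are in case (I), $d\equiv 2\pmod 4$, and there are no solutions at all — that disposes of $d=2$ immediately). For $d\equiv 3\pmod 4$, Theorem \ref{thm}(II) says there are no positive integer solutions unless $q^n\equiv \pm 1\pmod p$. Here $q=d+p\equiv d\pmod p$ and $n=p$, so by Fermat's little theorem $q^n=q^p\equiv q\equiv d\pmod p$. Hence the condition $q^n\equiv\pm 1\pmod p$ becomes $d\equiv\pm1\pmod p$, i.e. $p\mid d-1$ or $p\mid d+1$. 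Since $p>41$ and the largest $d$ in the list is $163$, we need to rule out $p\mid d\pm1$ for all these $d$: the values $d-1$ and $d+1$ range over $\{2,4,6,8,10,12,18,20,42,44,66,68,162,164\}$, and I would check that none of these has a prime factor exceeding $41$ (indeed $164=4\cdot41$, $162=2\cdot3^4$, $68=4\cdot17$, $66=2\cdot3\cdot11$, $44=4\cdot11$, $42=2\cdot3\cdot7$, etc., so the largest prime factor occurring is $41$), so for $p>41$ we never have $d\equiv\pm1\pmod p$. Therefore $q^n\not\equiv\pm1\pmod p$, and Theorem \ref{thm}(II) yields that \eqref{eqn1} — which is precisely the displayed equation with $q=d+p$, $n=p$ — has no positive integer solution. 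Combining with the $d=2$ case, the corollary follows.

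I do not expect a genuine obstacle here; the only subtlety is the parity remark noted above (for odd $d$, $d+p$ is even, so the hypothesis ``$d+p$ is a prime'' can only be read as a condition that is essentially never satisfied, making those cases vacuous), and I would simply flag that the substantive content is the $d\equiv 3\pmod4$, $q=d+p$ odd-prime situation plus the trivial $d=2$ case, the whole argument being a finite residue check against the bound $p>41$. If one instead wants a uniform non-vacuous statement one should either restrict to $d=2$ or replace $d+p$ by $d+2p$ (which is odd for odd $d$) and redo the identical computation, now with $q^p\equiv d\pmod p$ still holding and the same conclusion.
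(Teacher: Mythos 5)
Your proposal is correct and follows exactly the paper's (one-line) justification: apply Theorem \ref{thm} with $q=d+p$, $n=p$, use Fermat's little theorem to get $(d+p)^p\equiv d\pmod p$, and note that $d\not\equiv\pm1\pmod p$ for every $p>41$ since the largest prime factor of $d\pm1$ over the list is $41$ (from $164=4\cdot41$). Your additional observation that for odd $d$ the hypothesis ``$d+p$ prime'' is vacuous (as $d+p$ is then even) and that $d=2$ falls under part (I) is a fair and accurate caveat about the corollary as stated, but it does not affect the correctness of the deduction.
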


Let $\mathfrak{A}=\{7, 11,15,19,35,39,43,51,55,67,91,95,111,115,123,155,163,183, 
187,195,203,\\219,235,259,267, 295,299,323,355,371,~395,~399,403,~407,427,435,471,
483, 555,559,579,\\583,595,627,651,663,667,715,723,763,791,795,799,895,903,915,939,
943,~955,~979,~987,\\ 995,1003,1015,1023,1027,1043,1047,1119,1131,1139,1155,1159,~1195,
~1227,~1239,~1243,\\1299,1339,1379,1387, 1411,1435,1443,1463,1507,1551,1555,1595,1635,
1651,1659,1731
\}$. Then for $d\in \mathfrak{A}$, we have $h(-d)\in\{1,2,4,8,16,32\}$. 
Thus the following corollary follows from Theorem \ref{thm}.
\begin{cor}
Let $d\in\mathfrak{A}$ and let $p\geq 5$ be an odd prime. 
Then the equation
$$dx^2+3^{2p}p^{2m}=4y^p,~~ x, y\geq 1, m\geq 1, \gcd(x,y)=1,$$ 
has no integer solutions.
\end{cor}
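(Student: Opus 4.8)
The plan is to obtain the corollary as an immediate specialisation of Theorem \ref{thm}. First I would rewrite $dx^2+3^{2p}p^{2m}=4y^p$ in the form \eqref{eqn1}: the exponent prime is $p$, one of the two base primes is again $p$ carrying exponent $m$, and the second base prime is $q=3$ carrying exponent $n=p$. Since $p\geq 5$, the primes $p$ and $3$ are distinct odd primes, so the hypotheses on $p,q$ in \eqref{eqn1} are satisfied, and it only remains to verify the remaining assumptions of Theorem \ref{thm} for every $d\in\mathfrak{A}$ and to discard the exceptional family in part (II).

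Next I would record two structural facts about $\mathfrak{A}$. Every $d\in\mathfrak{A}$ is a square-free integer with $d\equiv 3\pmod 4$ --- this is precisely how the list was assembled, namely it enumerates the square-free $d\equiv 3\pmod 4$ for which $-d$ is a fundamental discriminant with $h(-d)\in\{1,2,4,8,16,32\}$ --- so we always fall into part (II) of Theorem \ref{thm} rather than part (I). Moreover $h(-d)$ is then a power of $2$, and since $p\geq 5$ is odd it cannot divide $h(-d)$; hence the hypothesis $p\nmid h(-d)$ holds for all $d\in\mathfrak{A}$ and all primes $p\geq 5$.

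It remains to show that the exceptional solutions in part (II) do not occur, i.e.\ that $q^n\not\equiv\pm1\pmod p$. Here $q^n=3^p\equiv 3\pmod p$ by Fermat's little theorem, and for $p\geq 5$ one has $3\not\equiv 1$ and $3\not\equiv -1\pmod p$ (otherwise $p\mid 2$ or $p\mid 4$). Thus $q^n\not\equiv\pm1\pmod p$, and Theorem \ref{thm}(II) gives that \eqref{eqn1}, hence the displayed equation, has no positive integer solution. Finally, $4y^p=dx^2+3^{2p}p^{2m}>0$ forces $y\geq 1$, while reducing the equation modulo $2$ shows $x\neq 0$ (the left side would otherwise be odd); together with the standing restriction $m\geq 1$ this upgrades ``no positive solution'' to ``no integer solution'' and completes the argument.

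There is no genuine obstacle in this argument --- all the work sits inside Theorem \ref{thm}. The only points demanding care are the bookkeeping of which symbol in \eqref{eqn1} plays which role (in particular the base $3$ must be substituted as $q$ with exponent $n=p$, not as $p$), and the verification that the finite list $\mathfrak{A}$ really does consist of integers $\equiv 3\pmod 4$ with $2$-power class number, which is a routine table lookup.
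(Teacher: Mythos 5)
Your proposal is correct and is exactly the argument the paper intends: the paper gives no explicit proof beyond noting $h(-d)\in\{1,2,4,8,16,32\}$ for $d\in\mathfrak{A}$ and invoking Theorem \ref{thm}, and you supply precisely the missing routine checks ($q=3$, $n=p$, $3^p\equiv 3\not\equiv\pm1\pmod p$ for $p\geq 5$, and $p\nmid h(-d)$ since $h(-d)$ is a power of $2$). The closing remarks about upgrading to ``no integer solutions'' are superfluous since the equation already restricts to $x,y,m\geq 1$, but they are harmless.
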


\section{Proof of Theorem \ref{thm}}
We begin with the following lemma which is extracted from  \cite[Corollary 3.1]{YU05}.
\begin{lem}\label{lemYU}
Let $d$ be a square-free positive integer. Assume that $k$ is a positive odd integer such that $\gcd(k, h(-d))=1$. Then all the positive integer solutions $(x,y,z)$ of the equation 
\begin{equation*}\label{eqyu}
dx^2+y^2=4z^k,~~ \gcd(dx,y)=1,
\end{equation*} 
can be expressed as 
$$\frac{x\sqrt{d}+y\sqrt{-1}}{2}=\lambda_1\left(\frac{u\sqrt{d}+\lambda_2 v\sqrt{-1}}{2}\right)^k,$$
where $u$ and $v$ are positive integers satisfying $4z=du^2+v^2$, $\gcd(du, v)=1$, and $\lambda_1, \lambda_2\in\{-1, 1\}$ when $d>3$. For $d=3$, $\lambda_1\in\{\pm 1, (\pm 1\pm \sqrt{-d})/2 \}$ and $\lambda_2=\pm 1$.
\end{lem}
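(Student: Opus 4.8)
The plan is to work in the ring of integers $\mathcal{O}$ of the imaginary quadratic field $K=\mathbb{Q}(\sqrt{-d})$, whose class number is $h:=h(-d)$, and to exploit the factorization of the equation into complex-conjugate factors. Writing $i=\sqrt{-1}$ and setting $\beta=\frac{y+x\sqrt{-d}}{2}$, its conjugate is $\bar\beta=\frac{y-x\sqrt{-d}}{2}$, and one computes $\beta\bar\beta=\frac{y^2+dx^2}{4}=z^k$, so that $N_{K/\mathbb{Q}}(\beta)=z^k$. The element $\frac{x\sqrt{d}+yi}{2}$ appearing in the statement is precisely $i\bar\beta$, since $i\sqrt{d}=\sqrt{-d}$; hence the asserted parametrization is equivalent to the claim that $\bar\beta$ is, up to a unit of $\mathcal{O}$, a perfect $k$-th power.

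First I would settle integrality and pin down the residue class of $d$. Reducing $dx^2+y^2=4z^k$ modulo $4$ and using $\gcd(dx,y)=1$, one checks that a solution can exist only when $d\equiv 3\pmod 4$ with $x,y$ both odd (for $d\equiv 1,2\pmod 4$ the congruence admits no coprime solution, so the statement holds vacuously there, which is consistent with part (I) of the main theorem). In the surviving case $\mathcal{O}=\mathbb{Z}[\frac{1+\sqrt{-d}}{2}]$, and the parity $x\equiv y\pmod 2$ guarantees $\beta,\bar\beta\in\mathcal{O}$.

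Next I would show that the ideals $(\beta)$ and $(\bar\beta)$ are coprime. If a prime ideal $\mathfrak{p}$ divided both, it would divide their sum $(y)$ and their difference $(x\sqrt{-d})$; the hypotheses $\gcd(x,y)=1$ and $\gcd(d,y)=1$ then force $\mathfrak{p}$ to divide neither $(x)$, $(y)$, nor $(d)$, a contradiction. Consequently, from $(\beta)(\bar\beta)=(z)^k$ and unique factorization of ideals in $\mathcal{O}$, each factor is a perfect $k$-th power of an ideal, say $(\beta)=\mathfrak{a}^k$. The class $[\mathfrak{a}]$ then has order dividing both $k$ (because $[\mathfrak{a}]^k=[(\beta)]=1$) and $h$ (by Lagrange); since $\gcd(k,h)=1$, writing $1=sk+th$ gives $[\mathfrak{a}]=[\mathfrak{a}]^{sk+th}=1$, so $\mathfrak{a}=(\gamma)$ is principal and $\beta=\epsilon\gamma^k$ for some unit $\epsilon\in\mathcal{O}^\times$.

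Finally I would run the unit analysis and translate back to the form in the statement. For $d>3$ one has $\mathcal{O}^\times=\{\pm 1\}$, and since $k$ is odd every unit is a $k$-th power, so $\beta$ is itself a $k$-th power; passing to $\bar\beta$, multiplying by $i$, and normalizing $\gamma=\frac{u+v\sqrt{-d}}{2}$ so that $u,v$ are positive, the two remaining independent sign ambiguities yield exactly $\lambda_1,\lambda_2\in\{-1,1\}$, while $4z=du^2+v^2$ and $\gcd(du,v)=1$ are inherited from the coprimality step. The main obstacle is the case $d=3$: here $\mathcal{O}^\times$ is the group of six sixth-roots of unity, and when $3\mid k$ not every unit is a $k$-th power, so $\epsilon$ cannot always be absorbed into $\gamma^k$. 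Handling this requires determining exactly which unit cosets survive modulo $k$-th powers, which is what accounts for the enlarged set $\lambda_1\in\{\pm 1,(\pm 1\pm\sqrt{-d})/2\}$; carrying out this bookkeeping together with the positivity normalization of $u$ and $v$ is the delicate part of the argument.
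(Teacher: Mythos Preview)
The paper does not prove this lemma at all: it is simply quoted from Yuan \cite[Corollary~3.1]{YU05} with the sentence ``We begin with the following lemma which is extracted from \cite[Corollary~3.1]{YU05}.'' So there is no ``paper's own proof'' to compare against.

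Your argument is the standard one and is essentially correct. You factor $dx^2+y^2=4z^k$ in $\mathcal{O}_K$ for $K=\mathbb{Q}(\sqrt{-d})$, check integrality via the forced congruence $d\equiv 3\pmod 4$, prove $(\beta)$ and $(\bar\beta)$ are coprime, deduce $(\beta)=\mathfrak{a}^k$, use $\gcd(k,h(-d))=1$ to make $\mathfrak{a}$ principal, and then sort out the units---with the $d=3$ case correctly identified as the only place where the unit group is larger than $\{\pm 1\}$ and cannot always be absorbed into the $k$-th power. Two cosmetic remarks: first, in your normalization $\gamma=\tfrac{u+v\sqrt{-d}}{2}$ the roles of $u$ and $v$ are swapped relative to the lemma's convention (there $u$ multiplies $\sqrt{d}$ and $v$ multiplies $i$), so your norm identity reads $4z=u^2+dv^2$ rather than $4z=du^2+v^2$; this is harmless. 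Second, the passage from ``$\bar\beta$ is a unit times a $k$-th power in $\mathcal{O}$'' back to the displayed formula in the $(\sqrt{d},i)$-notation goes through the identity $i^{1-k}=(-1)^{(k-1)/2}\in\{\pm1\}$, which you use implicitly; it is worth stating, since $i\notin K$ and one must check that the extra factor lands in $\mathcal{O}^\times$.
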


Given a fixed square-free positive integer $d$, let $(p,q,x,y,m, n)$ be a positive integer solution of \eqref{eqn1}. Then $x$ is odd since both $p$ and $q$ are odd, and hence \eqref{eqn1} modulo $4$ gives $d\equiv 3\pmod 4$. This confirms that \eqref{eqn1} has no positive integer solution when $d\equiv 1,2\pmod 4$. 

Utilizing the facts $\gcd(x,y)=1$ and $d$ is square-free in \eqref{eqn1}, we get $\gcd(pq, dx)=1$. Since $p$ is odd prime satisfying $p\nmid h(-d)$, so that by Lemma \ref{lemYU} we can find positive integers $u$ and $v$ such that 
\begin{equation}\label{eqp2}
\frac{x\sqrt{d}+p^mq^n\sqrt{-1}}{2}=\lambda_1\left(\frac{u\sqrt{d}+\lambda_2 v\sqrt{-1}}{2}\right)^p
\end{equation}
 and 
 \begin{equation}\label{eqp3}
 u^2d+v^2=4y
 \end{equation}
 with $\gcd(ud,v)=1$, where $\lambda_1, \lambda_2\in\{-1, 1\}$. Note that for $d=3$, we have $\lambda_1=\pm 1, (\pm1\pm \sqrt{-3})/2$ which satisfy $\lambda_1^6=1$ and thus it can be absorbed into the $p$-th power except for $p=3$.  The case $(d, p)=(3,3)$ is not possible by the hypothesis.% and 
% \begin{align*}
%  \delta\in\begin{cases} \left\{-1, 1, \frac{1+\sqrt{-c}}{2}, \frac{-1+\sqrt{-c}}{2}, \frac{1-\sqrt{-c}}{2},\frac{-1-\sqrt{-c}}{2} \right\}& \text{ if } c=3,\\
% \{-1, 1\} & \text{ otherwise}.
% \end{cases}
% \end{align*}

We now equate the real and imaginary parts in \eqref{eqp2} to get
\begin{align}\label{eqp4}
\begin{cases}
x=\dfrac{\lambda_1 u}{2^{p-1}}\mathcal{R}(d,u,v,p),\vspace{2mm}\\
p^mq^n=\dfrac{\lambda_1 \lambda_2 v}{2^{p-1}}\mathcal{I}(d,u,v,p).
\end{cases}
\end{align}
Since both $x$ and $pq$ are odd, \eqref{eqp4} shows that both $u$ and $v$ are odd.

Let us define,
\begin{equation}\label{eqp5}
\begin{cases}
\alpha=\dfrac{u\sqrt{d}+\lambda_2 v \sqrt{-1}}{2},\\
\beta=\dfrac{u\sqrt{d}-\lambda_2 v\sqrt{-1}}{2}.
\end{cases}
\end{equation}
\subsection{Lehmer numbers and existence of their primitive divisors}
A pair $(\alpha, \beta)$ of algebraic integers is said to be a Lehmer pair if $(\alpha + \beta)^2$ and $\alpha\beta$ are two non-zero coprime rational integers, and $\alpha/\beta$ is not a root of unity. For a given positive integer $n$, the $n$-th Lehmer number corresponds to the pair $(\alpha, \beta)$ is defined as 
$$\mathcal{L}_n(\alpha, \beta)=\begin{cases}
\dfrac{\alpha^n-\beta^n}{\alpha-\beta} & \text{ if $n$ is odd,} \vspace{1mm}\\
\dfrac{\alpha^n-\beta^n}{\alpha^2-\beta^2} & \text{ if $n$ is even}.
\end{cases}$$
Note that all Lehmer numbers are non-zero rational integers.  The Lehmer pairs $(\alpha_1, \beta_1)$ and $(\alpha_2, \beta_2)$ are said to be equivalent if $\alpha_1/\alpha_2=\beta_1/\beta_2\in \{\pm 1, \pm\sqrt{-1} \}$. A prime divisor $p$ of $\mathcal{L}_n(\alpha, \beta)$ is said to be primitive if $p\nmid(\alpha^2-\beta^2)^2
\mathcal{L}_1(\alpha, \beta) \mathcal{L}_2(\alpha, \beta) \cdots \mathcal{L}_{n-1}(\alpha, \beta)$. 
 The following classical result concerning the existence of primitive divisors of Lehmer numbers was proved in \cite[Theorem 1.4]{BH01}. 
\begin{thma}\label{thmBH}
For any integer $n>30$, the Lehmer numbers $\mathcal{L}_n (\alpha, \beta) $ have primitive divisors.
\end{thma}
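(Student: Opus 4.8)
The plan is to derive a contradiction from the assumption that $\mathcal{L}_n(\alpha,\beta)$ has no primitive divisor, by confronting an \emph{upper} bound for the $n$-th cyclotomic value with a transcendence-theoretic \emph{lower} bound. First I would pass from Lehmer numbers to the homogenised cyclotomic values
$$\Phi_n(\alpha,\beta)=\prod_{\substack{1\le k\le n\\ \gcd(k,n)=1}}\bigl(\alpha-\zeta_n^{\,k}\beta\bigr),\qquad \zeta_n=e^{2\pi i/n},$$
and record the standard factorisation of $\mathcal{L}_n$ into the $\Phi_d(\alpha,\beta)$ over $d\mid n$. A classical divisibility lemma in the spirit of Schinzel and Stewart then shows that the absence of a primitive divisor forces every prime dividing $\Phi_n(\alpha,\beta)$ to divide $n$, with bounded multiplicities, whence $|\Phi_n(\alpha,\beta)|\le c_1\,P(n)\le c_1 n$, where $P(n)$ is the largest prime factor of $n$ and $c_1$ an absolute constant.

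For the matching lower bound I would exploit the product defining $\Phi_n$. Since $(\alpha+\beta)^2$ and $\alpha\beta$ are coprime nonzero integers and $\alpha/\beta$ is not a root of unity, two regimes occur. If $(\alpha-\beta)^2>0$ then $\alpha,\beta$ are real with $|\alpha/\beta|\ne 1$, every factor $|\alpha-\zeta_n^k\beta|$ is bounded below geometrically, and $|\Phi_n(\alpha,\beta)|\gg|\alpha|^{\phi(n)}$ follows from elementary estimates. The genuine difficulty is the case $(\alpha-\beta)^2<0$: now $\alpha,\beta$ are complex conjugates, $|\alpha|=|\beta|=\sqrt{|\alpha\beta|}$ with $|\alpha\beta|\ge 2$ (the value $1$ being excluded, as it would force $\alpha/\beta$ to be a root of unity), and $\beta/\alpha=e^{i\theta}$ lies on the unit circle. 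Here a single factor $|\alpha-\zeta_n^k\beta|=|\alpha|\,\bigl|1-e^{i(\theta+2\pi k/n)}\bigr|$ can be minuscule when $\theta$ is close to $-2\pi k/n$, so I would isolate that factor and relate it to the linear form $\Lambda=n\log(\beta/\alpha)-2\pi i\,\ell$ for the best integer $\ell$.

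The crux is a sharp lower bound for $|\Lambda|$, a linear form in the two logarithms $\log(\beta/\alpha)$ and $2\pi i$, whose heights are $\asymp\log|\alpha|$ and $O(1)$ respectively; invoking the estimates of Laurent, Mignotte and Nesterenko gives $\log|\Lambda|\ge -c_2(\log n)^2\log|\alpha|$. Feeding this back into the product, the smallest factor costs at most $c_2(\log n)^2\log|\alpha|$ while the remaining factors contribute $\phi(n)\log|\alpha|$, so that together with the cyclotomic upper bound
$$\bigl(\phi(n)-c_2(\log n)^2\bigr)\log|\alpha|\le \log\bigl|\Phi_n(\alpha,\beta)\bigr|\le \log(c_1 n).$$
Because $\log|\alpha|\ge\tfrac12\log 2>0$ while $\phi(n)$ grows far faster than $(\log n)^2$, this inequality can hold only for $n\le N_0$ with $N_0$ an explicit absolute constant; the real case of the previous paragraph yields the same conclusion even more directly.

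The main obstacle---and the reason the sharp threshold $30$ is delicate---is that transcendence theory only reduces the problem to the large but finite range $30<n\le N_0$. The concluding step is computational: for each such $n$ I would sharpen the linear-form inequality by a continued-fraction or LLL reduction to shrink $N_0$ drastically, and then, using the explicit parametrisation of Lehmer pairs by coprime integer pairs $\bigl((\alpha+\beta)^2,\alpha\beta\bigr)$, verify directly that no pair is missing a primitive $n$-th divisor. Combined with the tabulation of the finitely many genuinely exceptional pairs occurring for $n\le 30$, this completes the proof and shows the bound $n>30$ to be best possible. All the difficulty is concentrated in the transcendence input and its numerical reduction; the cyclotomic bookkeeping of the first two paragraphs is comparatively routine.
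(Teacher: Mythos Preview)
The paper does not give its own proof of this statement: Theorem~A is quoted verbatim as \cite[Theorem~1.4]{BH01} and used as a black box, so there is nothing in the paper to compare your argument against. What you have sketched is, in outline, the strategy of the Bilu--Hanrot--Voutier paper itself (cyclotomic factorisation of $\mathcal{L}_n$, an arithmetic upper bound on $|\Phi_n(\alpha,\beta)|$ forced by the absence of a primitive divisor, a Baker-type lower bound via linear forms in two logarithms in the complex-conjugate case, and a finite computational search to close the gap down to $n\le 30$), so your proposal is consistent with the cited source rather than with anything the present paper does.
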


Given a Lehmer pair $(\alpha, \beta)$, let $a=(\alpha+\beta)^2$ and $b=(\alpha-\beta)^2$. Then $\alpha=(\sqrt{a}\pm\sqrt{b})/2$ and $\beta=(\sqrt{a}\mp\sqrt{b})/2$. This pair $(a, b)$ is called the parameters corresponding to the Lehmer pair $(\alpha, \beta)$. The following lemma is extracted from \cite[Theorem 1]{VO95}.
\begin{lem}\label{lemVO}
Let $p$ be a prime such that $7\leq p\leq 29$. If the Lehmer numbers $\mathcal{L}_p(\alpha, \beta)$ have no primitive divisor, then up to equivalence, the parameters $(a, b)$ of the corresponding pair $(\alpha, \beta)$ are as follows:
\begin{itemize}
\item[(i)] $(a, b)=(1,-7), (1, -19), (3, -5), (5, -7), (13, -3), (14, -22)$ when $p=7$;
\item[(ii)] $ (a, b)=(1,-7)$ when $p=13$.
\end{itemize}
\end{lem}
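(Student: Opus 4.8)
The plan is to convert the primitive-divisor condition into a sharp upper bound on $|\mathcal{L}_p(\alpha,\beta)|$, translate that bound into a two-variable Diophantine inequality in the parameters $(a,b)$, confine the solutions to an explicit finite box, and finally enumerate. I would begin by recording the identity that, for an odd prime $p$, the $p$-th Lehmer number coincides with the homogenized cyclotomic value $\prod_{k=1}^{p-1}(\alpha-\zeta_p^k\beta)=(\alpha^p-\beta^p)/(\alpha-\beta)$, where $\zeta_p=e^{2\pi i/p}$. The standard intrinsic theory of these values shows that the only prime that can divide this value without being primitive is $p$ itself, and it can occur only to the first power. Hence \emph{$\mathcal{L}_p(\alpha,\beta)$ has no primitive divisor if and only if $|\mathcal{L}_p(\alpha,\beta)|\in\{1,p\}$}. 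This is the key reduction; everything after it is an explicit analysis of the inequality $|\mathcal{L}_p(\alpha,\beta)|\le p$.

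Next I would make $|\mathcal{L}_p|$ explicit in terms of the parameters $a=(\alpha+\beta)^2$ and $b=(\alpha-\beta)^2$. Writing $\alpha\beta=(a-b)/4$, a nonzero rational integer by the coprimality built into the Lehmer-pair definition, one splits into two cases. If $\alpha,\beta$ are real (so $a,b>0$), then $|\mathcal{L}_p|$ grows at least geometrically in $p$, with ratio bounded below away from $1$ once the excluded root-of-unity case $\alpha/\beta=\pm1$ is removed; thus $|\mathcal{L}_p|\le p$ leaves only finitely many small $(a,b)$, which are checked directly and yield no exceptions in this range. The substantive case is the complex one, $\alpha=re^{i\phi}$ and $\beta=re^{-i\phi}$ with $r^2=|\alpha\beta|=|a-b|/4\ge1$ and $\cos^2\phi=a/|a-b|$; here $|\mathcal{L}_p|=r^{\,p-1}\,|\sin(p\phi)/\sin\phi|$, and the requirement that $\alpha/\beta$ not be a root of unity says $\phi/\pi\notin\mathbb{Q}$. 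The inequality $r^{\,p-1}|\sin(p\phi)|\le p\,|\sin\phi|$ then forces either $r$ close to $1$, which bounds $|a-b|$, or $\sin(p\phi)$ anomalously small, i.e.\ $p\phi$ very near an integer multiple of $\pi$.

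I expect the anomalously-small case to be the main obstacle. Controlling it amounts to a lower bound for $|\sin(p\phi)|$, equivalently for the distance from $p\phi/\pi$ to the nearest integer, under the constraint that $\cos^2\phi=a/|a-b|$ is rational of bounded height. For the fixed finite range $7\le p\le29$ this is handled by an explicit estimate: combining the algebraic constraint on $\cos\phi$ with an effective irrationality measure (or, uniformly, with Baker's theory of linear forms in logarithms) gives an explicit upper bound on $|a-b|$, hence on $|a|$ and $|b|$, for each such $p$, pinning $(a,b)$ inside an explicit finite box. The proof then concludes computationally: for each prime $p\in\{7,11,13,17,19,23,29\}$ I would evaluate the integer $\mathcal{L}_p(a,b)$ over all admissible $(a,b)$ in the box, retain exactly those with $|\mathcal{L}_p|\in\{1,p\}$, discard equivalent duplicates, recalling that $(\alpha_1,\beta_1)$ and $(\alpha_2,\beta_2)$ are equivalent when $\alpha_1/\alpha_2=\beta_1/\beta_2\in\{\pm1,\pm\sqrt{-1}\}$, and verify that the survivors are precisely the six pairs listed for $p=7$ and the single pair $(1,-7)$ for $p=13$, with none for $p\in\{11,17,19,23,29\}$. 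The delicate point throughout is guaranteeing that the box really is an upper bound, so that the finite search is exhaustive and no exceptional pair is missed.
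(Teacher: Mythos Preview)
The paper does not prove this lemma at all: it is stated with the remark ``extracted from \cite[Theorem 1]{VO95}'' and is used as a black box. So there is nothing in the paper to compare your argument against; the authors simply import Voutier's classification.

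Your outline is, in broad strokes, the route Voutier himself took: reduce ``no primitive divisor at prime index $p$'' to $|\mathcal{L}_p(\alpha,\beta)|\in\{1,p\}$, express $\mathcal{L}_p$ in terms of $r$ and $\phi$ in the complex case, use a lower bound for $|\sin(p\phi)|$ coming from linear forms in logarithms to confine $(a,b)$ to a finite region, and then search. So as a sketch of an independent proof it is reasonable. Two points deserve a caution. First, the equivalence ``no primitive divisor $\Longleftrightarrow |\mathcal{L}_p|\in\{1,p\}$'' is not quite automatic from the definition you quote: one needs the standard facts that for prime $p$ every prime divisor $q\ne p$ of $\mathcal{L}_p$ has rank of apparition exactly $p$ (hence is primitive), and that $p^2\nmid\mathcal{L}_p$; both are classical but should be cited or proved. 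Second, the passage from ``linear forms in logarithms give \emph{some} bound'' to ``here is an explicit box small enough to search'' is where all the work lies in Voutier's paper, and your sketch does not engage with it; without an explicit constant the argument is not a proof but a plausibility argument. For the purposes of the present paper, however, a citation is all that is required.
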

Let $F_k$ (resp. $L_k$) denote the $k$-th term in the Fibonacci (resp. Lucas) sequence defined by $F_0=0,   F_1= 1$,
and $F_{k+2}=F_k+F_{k+1}$ (resp. $L_0=2,  L_1=1$, and $L_{k+2}=L_k+L_{k+1}$), where $k\geq 0$ is an integer. 
The following lemma is a part of \cite[Theorem 1.3]{BH01}. 
\begin{lem}\label{lemBH}
For $p=3, 5$, let the Lehmer numbers $\mathcal{L}_p(\alpha, \beta)$ have no primitive divisor. Then up to equivalence, the parameters $(a, b)$ of the corresponding pair $(\alpha, \beta)$ are:
\begin{itemize}
\item[(i)] For $p=3, (a, b)=\begin{cases}(1+t, 1-3t) \text{ with } t\ne 1,\\
 (3^k+t, 3^k-3t) \text{ with } t\not\equiv 0\pmod 3, (k,t)\ne(1,1);\\ \end{cases}$ 
 \vspace{2mm}
\item[(ii)] For $p=5, (a, b)=\begin{cases}
(F_{k-2\varepsilon}, F_{k-2\varepsilon}-4F_k)\text{ with } k\geq 3,\\
 (L_{k-2\varepsilon}, L_{k-2\varepsilon}-4L_k)\text{ with } k\ne 1;
 \end{cases}
 $
\end{itemize}
where $t\ne 0$ and $k\geq 0$ are any integers and $\varepsilon=\pm 1$.
\end{lem}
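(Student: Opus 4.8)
The plan is to reduce the condition \emph{``$\mathcal{L}_p(\alpha,\beta)$ has no primitive divisor''} to an explicit value constraint on $\mathcal{L}_p(\alpha,\beta)$ itself, and then to solve the resulting Diophantine equation in the Lehmer parameters. Throughout I would write $a=(\alpha+\beta)^2$ and $c=\alpha\beta$, so that $b=(\alpha-\beta)^2=a-4c$, $(\alpha^2-\beta^2)^2=ab$, and $\gcd(a,c)=1$ by the defining property of a Lehmer pair. Since $p$ is prime one has $\mathcal{L}_1(\alpha,\beta)=\mathcal{L}_2(\alpha,\beta)=1$, so by the definition of a primitive divisor a prime $\ell\mid\mathcal{L}_p(\alpha,\beta)$ fails to be primitive precisely when $\ell$ divides $ab\,\mathcal{L}_3(\alpha,\beta)\cdots\mathcal{L}_{p-1}(\alpha,\beta)$. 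The first task, for each of $p=3,5$, is to show that the only such $\ell$ can be $p$ itself, and to bound its multiplicity.

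For $p=3$ I would compute $\mathcal{L}_3(\alpha,\beta)=\alpha^2+\alpha\beta+\beta^2=a-c$. If a prime $\ell$ divides both $\mathcal{L}_3(\alpha,\beta)$ and $ab$, then $\ell\nmid a$ (otherwise $\ell\mid c$, contradicting $\gcd(a,c)=1$), hence $\ell\mid b=a-4c$; subtracting $\ell\mid a-c$ from $\ell\mid a-4c$ gives $\ell\mid 3c$, and since $\ell\nmid c$ this forces $\ell=3$. Consequently $\mathcal{L}_3(\alpha,\beta)$ has no primitive divisor if and only if $a-c=\pm 3^s$ for some $s\geq 0$. Taking $c=t$ as a free integer parameter, the case $s=0$ yields $(a,b)=(1+t,1-3t)$ and the case $s=k\geq 1$ yields $(a,b)=(3^k+t,3^k-3t)$, which are exactly the two families in part (i). The excluded values ($t\neq 0$, $t\neq 1$, $t\not\equiv 0\pmod 3$, $(k,t)\neq(1,1)$) are precisely those that would make $(\alpha,\beta)$ degenerate, i.e. $\alpha/\beta$ a root of unity.

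For $p=5$ the same substitution gives $\mathcal{L}_5(\alpha,\beta)=\alpha^4+\alpha^3\beta+\alpha^2\beta^2+\alpha\beta^3+\beta^4=a^2-3ac+c^2$, together with $\mathcal{L}_3(\alpha,\beta)=a-c$ and $\mathcal{L}_4(\alpha,\beta)=a-2c$. A short congruence check on a prime $\ell\mid\mathcal{L}_5(\alpha,\beta)$ disposes of every non-primitive factor: $\ell\mid\mathcal{L}_3$ or $\ell\mid\mathcal{L}_4$ each forces $\ell\mid c$ and hence $\ell\mid a$, which is impossible; $\ell\mid a$ forces $\ell\mid c$; and $\ell\mid b$ gives $\mathcal{L}_5\equiv 5c^2\pmod{\ell}$, whence $\ell=5$. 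Reducing $\mathcal{L}_5(\alpha,\beta)$ modulo $25$ on the locus $a\equiv -c\pmod 5$ yields $\mathcal{L}_5\equiv 5c^2\pmod{25}$ with $5\nmid c$, so $5$ divides $\mathcal{L}_5(\alpha,\beta)$ to at most the first power. Hence $\mathcal{L}_5(\alpha,\beta)$ has no primitive divisor if and only if $a^2-3ac+c^2\in\{\pm 1,\pm 5\}$. Viewing this as a quadratic in $a$, its discriminant is $5c^2\pm 4$ in the first case and $5c^2\pm 20$ in the second, so solvability is governed by the classical facts that $5c^2\pm 4$ (resp. $5c^2\pm 20$) is a perfect square exactly when $c$ is a Fibonacci number $F_k$ (resp. a Lucas number $L_k$). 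Solving back for $a$ through the relation $L_k^2-5F_k^2=4(-1)^k$ then gives $a=F_{k-2\varepsilon}$, $c=F_k$ and $a=L_{k-2\varepsilon}$, $c=L_k$ with $\varepsilon=\pm 1$, which are the two families of part (ii).

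The reduction steps are elementary congruence arguments; the genuine obstacle is the \emph{complete} resolution of the binary quadratic equations $a^2-3ac+c^2=\pm 1$ and $a^2-3ac+c^2=\pm 5$, i.e. proving that every integral solution is a pair of suitably indexed Fibonacci or Lucas numbers with no sporadic exceptions. This is where the Pell structure of the form $x^2-3xy+y^2$ of discriminant $5$ must be used in full: one identifies its fundamental automorph, shows that every solution is produced from finitely many base solutions by iterating that automorph, and matches these orbits to the Fibonacci and Lucas sequences via the perfect-square criteria for $5c^2\pm 4$ and $5c^2\pm 20$. A secondary but essential point is the systematic exclusion of degenerate Lehmer pairs, those with $\alpha/\beta$ a root of unity, which is exactly what fixes the index restrictions ($k\geq 3$ and $k\neq 1$ for $p=5$, and the analogous conditions for $p=3$) recorded in the statement.
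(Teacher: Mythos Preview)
The paper does not actually prove this lemma: it is simply quoted as a fragment of \cite[Theorem~1.3]{BH01}, with no argument given. Your proposal therefore goes well beyond what the paper does, supplying a direct elementary proof for the two small primes $p=3,5$ rather than appealing to the full Bilu--Hanrot--Voutier classification.

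Your argument is sound. For $p=3$ the computation $\mathcal{L}_3=a-c$ and the gcd chase forcing any non-primitive prime divisor to equal $3$ are correct (note also that $3\mid a-c$ automatically gives $3\mid b=a-4c$, so $3$ is indeed always non-primitive when it occurs), and the parameterisation $c=t$ recovers the two listed families once one uses that equivalence identifies $(a,b)$ with $(-a,-b)$, which absorbs the sign of $a-c$. For $p=5$ your expression $\mathcal{L}_5=a^2-3ac+c^2$ is right, the case split showing that the only possible non-primitive prime is $5$ is clean, and the lift $a\equiv -c\pmod 5\Rightarrow \mathcal{L}_5\equiv 5c^2\pmod{25}$ correctly pins $5\parallel\mathcal{L}_5$. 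The remaining step, classifying all integer solutions of $a^2-3ac+c^2\in\{\pm1,\pm5\}$ via the automorph of this discriminant-$5$ form and matching them to $(F_{k\pm2},F_k)$ and $(L_{k\pm2},L_k)$ through the identities $3F_k\pm L_k=2F_{k\pm2}$ and $3L_k\pm 5F_k=2L_{k\pm2}$, is classical and exactly the right tool; you are right to flag it as the one place requiring genuine input.

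In short: the paper imports the result wholesale from \cite{BH01}, gaining brevity and avoiding any Pell-equation bookkeeping; your route is longer but fully self-contained and keeps the argument at the level of elementary quadratic-form theory, which is a reasonable trade in a paper whose methods are advertised as elementary.
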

The following few small results will be useful in the sequel.
\begin{lem}\label{lemPD}
The pair $(\alpha, \beta)$ as defined in \eqref{eqp5}  is a Lehmer pair, and the corresponding parameter is $(u^2d, -v^2)$. 
\end{lem}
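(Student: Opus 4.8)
The plan is to verify the three defining conditions of a Lehmer pair directly from the explicit expressions in \eqref{eqp5}, and to read off the parameter pair along the way. First I would record the elementary identities $\alpha+\beta=u\sqrt d$ and $\alpha-\beta=\lambda_2 v\sqrt{-1}$, so that, using $\lambda_2^2=1$, one has $(\alpha+\beta)^2=u^2d$ and $(\alpha-\beta)^2=-v^2$; by the definition of the parameters recalled before Lemma \ref{lemVO}, this is exactly the claimed pair $(u^2d,-v^2)$, once $(\alpha,\beta)$ is known to be a Lehmer pair. A short computation gives $\alpha^2=\tfrac14(u^2d-v^2)+\tfrac12\lambda_2 uv\sqrt{-d}$, from which one checks that $\alpha$ (and $\beta$) is an algebraic integer, and that $\alpha\beta=\tfrac14(u^2d+v^2)=y$ by \eqref{eqp3}; in particular $(\alpha+\beta)^2=u^2d$ and $\alpha\beta=y$ are nonzero rational integers.

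For the coprimality of $u^2d$ and $y$ I would argue by contradiction: a common prime divisor $\ell$ would divide $ud$ (as $d$ is square-free), hence $\ell\mid v^2$ by $4y=u^2d+v^2$, hence $\ell\mid v$, contradicting $\gcd(ud,v)=1$ from \eqref{eqp3}; the prime $2$ causes no trouble since $u,v$ are odd (as noted after \eqref{eqp4}), so $u^2d$ is odd.

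The main obstacle is ruling out that $\alpha/\beta$ is a root of unity. Here I would use $\alpha/\beta=\alpha^2/(\alpha\beta)=\alpha^2/y$, which by the formula for $\alpha^2$ lies in the imaginary quadratic field $\mathbb{Q}(\sqrt{-d})$ (note $d\equiv 3\pmod 4$ in the relevant case, so $d\geq 3$ and $-d$ is not a square). The only roots of unity in such a field are $\pm1$, together with the sixth roots of unity when $d=3$. One has $\alpha/\beta\ne\pm1$ because $\alpha=\beta$ would force $v=0$ and $\alpha=-\beta$ would force $u=0$, neither of which holds. For $d=3$, comparing the $\sqrt{-3}$-parts shows that $\alpha/\beta$ being a primitive sixth root of unity forces $uv=y$, hence $4uv=3u^2+v^2$, i.e. $(3u-v)(u-v)=0$; then $\gcd(3u,v)=1$ leaves only $u=v=1$, $y=1$, in which case \eqref{eqn1} would read $3x^2+p^{2m}q^{2n}=4$, impossible for positive $x,p,q,m,n$. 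So this degenerate case does not arise from a genuine solution, and hence $\alpha/\beta$ is not a root of unity.

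Putting these together, $(\alpha+\beta)^2$ and $\alpha\beta$ are coprime nonzero rational integers and $\alpha/\beta$ is not a root of unity, so $(\alpha,\beta)$ is a Lehmer pair; and since $(\alpha+\beta)^2=u^2d$, $(\alpha-\beta)^2=-v^2$, its parameter is $(u^2d,-v^2)$, as claimed. I expect the verification in the last paragraph (the root-of-unity condition, especially the $d=3$ subcase) to be the only genuinely delicate point; everything else is a direct computation.
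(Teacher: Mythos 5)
Your proof is correct, and while the routine parts (algebraic integrality, $(\alpha+\beta)^2=u^2d$, $(\alpha-\beta)^2=-v^2$, $\alpha\beta=y$, and the coprimality of $u^2d$ and $y$ via $\gcd(ud,v)=1$) match the paper's, you handle the one delicate point --- that $\alpha/\beta$ is not a root of unity --- by a genuinely different route. The paper derives from $(\alpha+\beta)^2/(\alpha\beta)=u^2d/y$ the integral quadratic $y\,t^2+(2y-u^2d)\,t+y=0$ satisfied by $t=\alpha/\beta$, and uses $\gcd(2y-u^2d,\,y)=\gcd(u^2d,y)=1$ to conclude that $\alpha/\beta$ is not an algebraic integer, hence not a root of unity; note this tacitly uses $y>1$, which holds since $4y^p=dx^2+p^{2m}q^{2n}>4$. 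You instead observe that $\alpha/\beta=\alpha^2/y$ lies in $\mathbb{Q}(\sqrt{-d})$ and enumerate the roots of unity of that imaginary quadratic field, which forces an explicit case analysis: $\pm1$ is excluded since $u,v\neq 0$, and the sixth roots of unity for $d=3$ are excluded by comparing $\sqrt{-3}$-parts, reducing to $u=v=y=1$ and then appealing to the original equation \eqref{eqn1}. Both arguments are sound; the paper's is more uniform in $d$ (no special treatment of $d=3$, which is pleasant since $d=3$ is already slightly exceptional in Lemma \ref{lemYU}) and purely local to \eqref{eqp3}, whereas yours is more concrete and makes transparent exactly which degenerate configurations could produce a root of unity, at the cost of invoking the classification of roots of unity in imaginary quadratic fields and one extra appeal to the Diophantine equation itself.
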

\begin{proof}
Utilizing \eqref{eqp3}, we see that $\alpha$ satisfies the polynomial $X^4-(2y-v^2)X^2+y^2\in \mathbb{Z}[X]$. Thus $\alpha$ is an algebraic integer and so is $\beta$. Since $\gcd(ud,v)=1$, so that \eqref{eqp3} gives $\gcd(ud, y)=1$, and hence $(\alpha+\beta)^2=u^2d$ and $\alpha\beta=y$ are coprime. 

Now it follows from the following identity $$\frac{u^2d}{y}=\frac{(\alpha+\beta)^2}{\alpha\beta}=\frac{\alpha}{\beta}+\frac{\beta}{\alpha}+2$$ 
that $$y\left(\frac{\alpha}{\beta}\right)^2+(2y-u^2d)\frac{\alpha}{\beta}+y=0.$$
As $\gcd(2y-u^2d, y)=\gcd(ud,y)=1$, so that $\dfrac{\alpha}{\beta}$ is not an algebraic integer and hence it is not a root of unity. This completes the proof. 
\end{proof}

\begin{lem}\label{lemPE} 
Let $p$, $\alpha$ and $\beta$ be as in \eqref{eqp2} and \eqref{eqp5}. Then $q$ is the only possible primitive divisor of $\mathcal{L}_p(\alpha, \beta)$. 
\end{lem}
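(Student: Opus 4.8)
The plan is to compute $\mathcal{L}_p(\alpha,\beta)$ explicitly from \eqref{eqp5} and read off its prime factorization from the second equation of \eqref{eqp4}. Since $p$ is odd, $\mathcal{L}_p(\alpha,\beta)=(\alpha^p-\beta^p)/(\alpha-\beta)$. From \eqref{eqp5} we have $\alpha-\beta=\lambda_2 v\sqrt{-1}$, and expanding $\alpha^p-\beta^p$ by the binomial theorem keeps exactly the terms with odd powers of $\sqrt{-1}$, which is precisely how $\mathcal{I}(d,u,v,p)$ was defined; thus $\alpha^p-\beta^p=\dfrac{\lambda_2 v\sqrt{-1}}{2^{p-1}}\,\mathcal{I}(d,u,v,p)$, and therefore
$$\mathcal{L}_p(\alpha,\beta)=\frac{\mathcal{I}(d,u,v,p)}{2^{p-1}}.$$
Comparing with the second line of \eqref{eqp4}, namely $p^m q^n=\dfrac{\lambda_1\lambda_2 v}{2^{p-1}}\mathcal{I}(d,u,v,p)$, we get $p^m q^n = \lambda_1\lambda_2 v\,\mathcal{L}_p(\alpha,\beta)$, so that $\mathcal{L}_p(\alpha,\beta)$ divides $p^m q^n$ (up to sign), i.e. the only primes that can divide the Lehmer number are $p$ and $q$.

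Next I would rule out $p$ as a primitive divisor. A primitive divisor of $\mathcal{L}_p(\alpha,\beta)$ must not divide $(\alpha^2-\beta^2)^2\mathcal{L}_1(\alpha,\beta)\cdots\mathcal{L}_{p-1}(\alpha,\beta)$; in particular it must not divide $p\cdot(\alpha^2-\beta^2)^2$ — more to the point, there is a standard fact that a primitive prime divisor $\ell$ of $\mathcal{L}_n(\alpha,\beta)$ satisfies $\ell\equiv\pm1\pmod n$, so $\ell\neq n$ when $\ell$ is the index itself; hence $p$ is never primitive for $\mathcal{L}_p$. Alternatively, and more elementarily in the spirit of the paper, one uses the first congruence of \eqref{eqi}: $\mathcal{I}(d,u,v,p)\equiv(-1)^{(p-1)/2}v^{p-1}\pmod p$, and since $\gcd(v,p)=1$ (as $\gcd(ud,v)=1$ and, from \eqref{eqp4}, $p\mid p^mq^n$ forces $p\nmid v$ once one checks $v$ is odd and coprime to the relevant quantities), we get $p\nmid \mathcal{L}_p(\alpha,\beta)\cdot 2^{p-1}$, hence $p\nmid\mathcal{L}_p(\alpha,\beta)$. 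Either way, $p$ does not divide $\mathcal{L}_p(\alpha,\beta)$ at all, so the only prime left that can be a primitive divisor is $q$. This proves the claim.

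The main obstacle is the bookkeeping around the factor $2^{p-1}$ and the signs $\lambda_1,\lambda_2$: one must verify that $\mathcal{L}_p(\alpha,\beta)$, which is a priori a rational integer by the remark after Theorem \ref{thmBH} together with Lemma \ref{lemPD}, is exactly $\mathcal{I}(d,u,v,p)/2^{p-1}$ on the nose (so that the $2$-adic valuations match and $2$ does not sneak in as a divisor), and that $q\nmid 2(\alpha^2-\beta^2)$ is not actually needed here since we only claim $q$ is the \emph{only possible} primitive divisor, not that it \emph{is} one. So the write-up is short: expand the binomial, identify $\mathcal{I}$, divide, compare with \eqref{eqp4}, and invoke \eqref{eqi} modulo $p$ to eliminate $p$.
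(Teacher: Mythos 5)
Your main line of argument is exactly the paper's: from \eqref{eqp2} and \eqref{eqp5} one gets $|\mathcal{L}_p(\alpha,\beta)|=|p^mq^n/v|$ (equivalently your $p^mq^n=\lambda_1\lambda_2\,v\,\mathcal{L}_p(\alpha,\beta)$), so only $p$ and $q$ can divide $\mathcal{L}_p(\alpha,\beta)$, and then the standard fact that a primitive divisor $\ell$ of $\mathcal{L}_t(\alpha,\beta)$ satisfies $\ell\equiv\pm1\pmod t$ rules out $p$. That route is correct and is precisely what the paper does, so the lemma is proved.

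One caution about your ``alternative, more elementary'' route: the claim $\gcd(v,p)=1$ is not justified and is in general false in this setting. From \eqref{eqp6} one only gets $v\mid p^mq^n$, and the completion of the proof of Theorem \ref{thm} explicitly works with $v=p^{m_1}q^{n_1}$ where $m_1\geq 1$; in that case the congruence $\mathcal{I}(d,u,v,p)\equiv(-1)^{(p-1)/2}v^{p-1}\pmod p$ gives $p\mid\mathcal{I}(d,u,v,p)$ rather than the opposite, and indeed $p$ may divide $\mathcal{L}_p(\alpha,\beta)$. So your concluding sentence ``$p$ does not divide $\mathcal{L}_p(\alpha,\beta)$ at all'' is stronger than what is true; what is true (and all that is needed) is that $p$ cannot be a \emph{primitive} divisor, either by the $\ell\equiv\pm1\pmod p$ criterion or, in the case $p\mid v$, because $p\mid(\alpha-\beta)^2=-v^2$ disqualifies it by the very definition of primitivity. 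Drop the alternative branch (or repair it along these lines) and your write-up coincides with the paper's proof.
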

\begin{proof}
By Lemma \ref{lemPD}, $(\alpha, \beta)$ is a Lehmer pair and thus it determines the Lehmer numbers $\mathcal{L}_t(\alpha, \beta)$ for any integer $t\geq 1$. Since $p$ is odd prime, so that $$\mathcal{L}_p(\alpha, \beta)=\frac{\alpha^p-\beta^p}{\alpha-\beta}.$$
Utilizing \eqref{eqp2} and \eqref{eqp5}, we get 
\begin{equation}\label{eqp6}
|\mathcal{L}_p(\alpha, \beta)|=\left| \frac{p^mq^n}{v}\right|.
\end{equation}
This shows that $v\mid p^mq^n$ as $\mathcal{L}_p(\alpha, \beta)\in \mathbb{Z}$, and thus $p$ and  $q$ are only candidate for the primitive divisors of $\mathcal{L}_p(\alpha, \beta)$. However it is well  known that if $q$ is a primitive divisor of $\mathcal{L}_t(\alpha, \beta)$, then $q\equiv \pm 1 \pmod t$.  This fact confirms that $q$ is the only possible primitive divisor of $ \mathcal{L}_p(\alpha, \beta)$. 
\end{proof}
\subsection{Some properties of Lucas and Fibonacci sequences}
In order to complete the proof of Theorem \ref{thm}, we need these basic results. 
\begin{thma}[{\cite[Theorems 1 and 3]{CO64}}]\label{thmCO} For an integer $k\geq 0$, let $F_k$ (resp. $L_k$) denote the $k$-th Fibonacci (resp. Lucas) number. Then
\begin{itemize}
\item[(i)] if $L_k=x^2$, then $(k,x)=(1,1),(3,2)$;
\item[(ii)] if $F_k=x^2$, then $(k,x)=(0,0), (1,1),(2,1), (12,12)$.
\end{itemize}
\end{thma}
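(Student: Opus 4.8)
The plan is to follow Cohn's elementary method, resting on four tools: the doubling formulas $F_{2m}=F_mL_m$ and $L_{2m}=L_m^2-2(-1)^m$; the companion identity $L_m^2-5F_m^2=4(-1)^m$ (needed for the Pell-type and $2\square$ steps below); the fact that $\gcd(F_m,L_m)$ equals $2$ when $3\mid m$ and $1$ otherwise, together with the $2$-adic valuations $v_2(F_m)=1,\ v_2(L_m)=2$ for $m\equiv 3\pmod 6$ and $v_2(F_m)=v_2(m)+2,\ v_2(L_m)=1$ for $6\mid m$; and the periodicity of both sequences modulo $8$ (period $12$). The engine for the odd cases is the congruence $L_{m+2K}\equiv(-1)^{K+1}L_m\pmod{L_K}$ and its Fibonacci analogue $F_{m+2K}\equiv(-1)^{K+1}F_m\pmod{L_K}$, both read off from the addition formula $L_{m+n}=L_mL_n-(-1)^nL_{m-n}$ (resp. $F_{m+n}=F_mL_n-(-1)^nF_{m-n}$), combined with the stability $L_{2^j}\equiv 3\pmod 4$ for every $j\geq 1$, so that $L_{2^j}$ always has a prime factor $p\equiv 3\pmod 4$.

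For part (i) I would first sieve modulo $8$: since quadratic residues modulo $8$ lie in $\{0,1,4\}$, reducing $L_k$ modulo $8$ forces $k\equiv 1,3,9\pmod{12}$, all odd. The classes $k\equiv 3,9\pmod{12}$ amount to $k\equiv 3\pmod 6$; writing $k=3r$ with $r$ odd gives the coprime factorisation $L_{3r}=L_r(L_r^2+3)$, so $L_k=\square$ forces both $L_r=\square$ and $L_r^2+3=\square$, and the latter equation $(w-L_r)(w+L_r)=3$ has only $L_r=1$, i.e. $r=1$ and $k=3$. For the surviving class $k\equiv 1\pmod{12}$ with $k>1$ one has $a:=v_2(k-1)\geq 2$; taking $K=2^{a-1}\geq 2$ and iterating the congruence above yields $L_k\equiv -1\pmod{L_K}$, and since $L_K\equiv 3\pmod 4$ has a prime factor $p\equiv 3\pmod 4$, $-1$ is a non-residue modulo $p$, contradicting $L_k=\square$. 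A direct check of $k=0,1,3$ then leaves exactly $(k,x)=(1,1),(3,2)$.

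For part (ii) the same mod-$8$ sieve restricts $k$ to $k\equiv 0,1,2,6,11\pmod{12}$. The odd classes $k\equiv 1,11\pmod{12}$ fall to the Fibonacci version of the modulus trick, collapsing $F_k$ to $\pm F_{\pm 1}\equiv -1$ modulo a prime $\equiv 3\pmod 4$. For the even classes I would write $k=2m$ and use $F_k=F_mL_m$: when $k\equiv 2\pmod{12}$ the factors are coprime, so $L_m=\square$ and part (i) forces $m=1$, $k=2$; when $k\equiv 6\pmod{12}$ the valuation count gives $v_2(F_k)=v_2(F_m)+v_2(L_m)=3$, an odd exponent, so $F_k$ cannot be a square; and when $k\equiv 0\pmod{12}$ the coprime odd parts of $F_m$ and $L_m$ must separately be squares, which reduces the problem to the companion statement that $L_m=2\square$ only for $m=0,6$, giving $k=0,12$. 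A final check isolates $(k,x)=(0,0),(1,1),(2,1),(12,12)$.

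The main obstacle is that the four Cohn problems, namely $L_k=\square$, $F_k=\square$, $L_k=2\square$ and $F_k=2\square$, are coupled through the doubling relation, so the case $12\mid k$ of part (ii) cannot be closed without proving $L_m=2\square\Rightarrow m\in\{0,6\}$ in tandem; these companion equations must be run through the identical sieve, factorisation and $L_K$-modulus machinery, with a simultaneous descent on $v_2(k)$ to make the argument terminate. The delicate point throughout is verifying, for each surviving residue class, that the auxiliary modulus $L_{2^{a-1}}$ genuinely produces a quadratic non-residue; this hinges entirely on $L_{2^j}\equiv 3\pmod 4$, which guarantees a prime factor $p\equiv 3\pmod 4$ and hence that $-1$ (the value to which $L_k$ or $F_k$ collapses) is a non-residue modulo $p$.
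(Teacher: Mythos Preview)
The paper does not supply its own proof of this statement; it is quoted verbatim as Theorems~1 and~3 of Cohn's 1964 paper \cite{CO64} and used as a black box in \S2.3. Your proposal is a faithful and correct reconstruction of Cohn's original argument: the mod-$8$ sieve, the coprime factorisation $L_{3r}=L_r(L_r^2+3)$ for odd $r$, the doubling $F_{2m}=F_mL_m$ together with the $\gcd$ and $2$-adic information, and the descent modulo $L_{2^{a-1}}$ driven by $L_{2^j}\equiv 3\pmod 4$ are exactly the ingredients Cohn uses, and your derivation of the key congruences $L_{m+2K}\equiv(-1)^{K+1}L_m$ and $F_{m+2K}\equiv(-1)^{K+1}F_m\pmod{L_K}$ from the addition formulas is the standard one. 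You are also right that the $12\mid k$ branch of part~(ii) cannot be closed without the companion result $L_m=2\square\Rightarrow m\in\{0,6\}$; Cohn indeed proves the four problems $L_k=\square$, $L_k=2\square$, $F_k=\square$, $F_k=2\square$ simultaneously by the same machinery, exactly as you indicate.
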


We extract the following lemma from \cite[Theorem 3.3]{KK15}.
\begin{lem}\label{lemKK}
Let $F_k$ be as in Theorem \ref{thmCO}. If $F_k=5x^2$, then $(k, x)=(5,1)$. 
\end{lem}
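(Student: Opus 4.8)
The plan is to factor $F_k$ by means of a Fibonacci multisection identity, reduce the problem to two coprime perfect-square conditions, and then appeal to the classification of square Fibonacci numbers furnished by Theorem \ref{thmCO}. Throughout I take $x\geq 1$, since $x=0$ corresponds only to the trivial index $k=0$, which is excluded.

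The first step would be to pin down $k$ by a divisibility argument. As the rank of apparition of the prime $5$ in the Fibonacci sequence equals $5$ (indeed $F_5=5$), one has $5\mid F_k$ if and only if $5\mid k$. Since $F_k=5x^2$ forces $5\mid F_k$, I may write $k=5j$ with $j\geq 1$. I would then invoke the standard multisection formula
$$F_{5j}=25F_j^5+25(-1)^jF_j^3+5F_j=5F_j\bigl(5F_j^4+5(-1)^jF_j^2+1\bigr),$$
which is verified directly. Writing $G_j:=5F_j^4+5(-1)^jF_j^2+1$, the equation $F_{5j}=5x^2$ collapses to $F_jG_j=x^2$.

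Next I would observe that reducing modulo $F_j$ gives $G_j\equiv 1\pmod{F_j}$, whence $\gcd(F_j,G_j)=1$. Since $F_j$ and $G_j$ are both positive for $j\geq 1$ and coprime, each must itself be a perfect square. The condition $F_j=\square$ is precisely the situation of Theorem \ref{thmCO}(ii), which restricts $j$ to the finite set $\{1,2,12\}$ (the value $j=0$ being discarded). It then remains to decide, for these three indices, whether $G_j$ is a square: $G_1=5-5+1=1=1^2$ produces the solution $k=5$, $x=1$; $G_2=5+5+1=11$ is not a square; and $G_{12}=5\cdot 144^4+5\cdot 144^2+1$ must be shown to be a non-square. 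This leaves $(k,x)=(5,1)$ as the only solution.

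The main obstacle is the last case $j=12$, where $G_{12}$ is a nine-digit integer. I would dispose of it either by a direct square-root computation or, more structurally, by sandwiching $G_{12}$ strictly between consecutive squares near $\sqrt{5}\,F_{12}^2$; the irrationality of $\sqrt5$ guarantees that $\sqrt5\,F_{12}^2$ is not an integer, so it suffices to verify that $G_{12}$ lies in the open interval between $\lfloor\sqrt5\,F_{12}^2\rfloor^2$ and $\bigl(\lfloor\sqrt5\,F_{12}^2\rfloor+1\bigr)^2$.
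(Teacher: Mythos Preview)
Your argument is correct, and it actually supplies a proof where the paper gives none: in the paper this lemma is not proved at all but simply \emph{extracted} from \cite[Theorem 3.3]{KK15}. Your route is genuinely different from ``cite the reference'': you use the multisection identity $F_{5j}=5F_j\bigl(5F_j^4+5(-1)^jF_j^2+1\bigr)$, the coprimality $\gcd(F_j,G_j)=1$, and then Cohn's classification of square Fibonacci numbers---which is already available in the paper as Theorem~\ref{thmCO}(ii)---to reduce everything to three explicit checks. This makes the paper more self-contained, at the cost of a short computation, whereas the citation to \cite{KK15} covers a more general family (Fibonacci and Lucas numbers of the form $5x^2$) but imports outside machinery.

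One small correction on the final case $j=12$: your ``structural'' sandwiching is off by one. With $F_{12}^2=20736$ one has $\lfloor\sqrt{5}\,F_{12}^2\rfloor=46367$, but
\[
G_{12}=5\cdot 144^4+5\cdot 144^2+1=2150012161
\]
does \emph{not} lie between $46367^2$ and $46368^2$; the extra term $5F_{12}^2+1$ pushes it past $46368^2=2149991424$. The correct sandwich is $46368^2<G_{12}<46369^2=2150084161$, which you obtain immediately from the direct square-root computation you already propose as the first option. So just drop the $\sqrt{5}\,F_{12}^2$ heuristic and keep the direct check.
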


We also recall the following lemma from \cite[Lemma 2.1]{HO20}.
\begin{lem}\label{lemfl} Let $F_k$ and $L_k$ be as in Theorem \ref{thmCO}.
 Then for $\varepsilon=\pm 1$,
\begin{itemize}
\item[(i)] $4F_k-F_{k-2\varepsilon}=L_{k+\varepsilon}$,
\item[(ii)] $4L_k-L_{k-2\varepsilon}=5F_{k+\varepsilon}$
\end{itemize}
\end{lem}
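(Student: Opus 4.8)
The plan is to treat both identities by the standard device for sequences satisfying a second–order linear recurrence. For a fixed $\varepsilon\in\{-1,1\}$ and part (i), put $A_k=4F_k-F_{k-2\varepsilon}$ and $B_k=L_{k+\varepsilon}$, regarded as bi-infinite sequences in $k$ after extending $F$ and $L$ to all integers by $F_{-k}=(-1)^{k+1}F_k$ and $L_{-k}=(-1)^kL_k$. Since $F$ and $L$ satisfy $x_{k+2}=x_{k+1}+x_k$ and this recurrence is linear and invariant under index shifts, both $A_k$ and $B_k$ satisfy it as well; hence $A_k=B_k$ for all $k$ as soon as it holds for two consecutive values of $k$.

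So the first step is to record the (extended) values $F_{-2}=-1$, $F_{-1}=1$, $F_0=0$, $F_1=1$, $F_2=1$, $F_3=2$ and $L_{-2}=3$, $L_{-1}=-1$, $L_0=2$, $L_1=1$, $L_2=3$, $L_3=4$. The second step is the two base-case checks: for $\varepsilon=1$ one verifies $A_0=-F_{-2}=1=L_1=B_0$ and $A_1=4-F_{-1}=3=L_2=B_1$, while for $\varepsilon=-1$ one verifies $A_0=-F_2=-1=L_{-1}=B_0$ and $A_1=4-F_3=2=L_0=B_1$. This settles (i). Part (ii) is handled identically with $A_k=4L_k-L_{k-2\varepsilon}$ and $B_k=5F_{k+\varepsilon}$: both sides again obey $x_{k+2}=x_{k+1}+x_k$, and the two base cases ($k=0,1$, for each sign of $\varepsilon$) evaluate to $5,5,5,0$, matching $5F_1,5F_2,5F_{-1},5F_0$.

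An alternative, if one prefers to avoid the negative-index convention altogether, is to substitute Binet's formulas $F_k=(\phi^k-\psi^k)/\sqrt5$ and $L_k=\phi^k+\psi^k$ with $\phi,\psi=(1\pm\sqrt5)/2$. Each side of (i) becomes a $\mathbb{Z}[\sqrt5]$-linear combination of $\phi^k$ and $\psi^k$, and comparing the coefficient of $\phi^k$ reduces the identity to the scalar relation $4-\phi^{-2\varepsilon}=\sqrt5\,\phi^{\varepsilon}$, the coefficient of $\psi^k$ giving the $\phi\leftrightarrow\psi$ conjugate; both follow at once from $\phi^2=\phi+1$ and $\phi\psi=-1$, and similarly for (ii).

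There is no substantive obstacle here: the statement is an elementary identity for Fibonacci and Lucas numbers. The only point requiring a little care is making sure the two-point verification is legitimate — either by extending the sequences to negative indices as above, or simply by choosing the two base values of $k$ inside the range where every index that occurs is nonnegative (for instance $k=2$ and $k=3$), so that only the defining recurrence is invoked.
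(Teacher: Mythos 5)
Your proof is correct, and in fact the paper contains no argument to compare it with: Lemma \ref{lemfl} is simply quoted from \cite[Lemma 2.1]{HO20} without proof, so your self-contained verification is a genuine addition rather than a variant of an in-paper argument. Both of your routes are sound: the two sides of each identity satisfy the recurrence $x_{k+2}=x_{k+1}+x_k$ in $k$, so agreement at two consecutive indices propagates in both directions, and your base-case values (using the standard extensions $F_{-k}=(-1)^{k+1}F_k$, $L_{-k}=(-1)^kL_k$, which are exactly the ones forced by running the recurrence backwards) all check out, as does the Binet reduction to $4-\phi^{-2\varepsilon}=\sqrt5\,\phi^{\varepsilon}$ via $\phi^2=\phi+1$, $\phi\psi=-1$. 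One small remark in favour of your negative-index version over the ``start at $k=2,3$'' variant: since the lemma is applied in the paper with $k\ge 0$ and $\varepsilon=\pm1$ (so that indices such as $k-2\varepsilon$ can be negative, e.g.\ in \eqref{eqq52} with $k=0$), the extension to negative indices is implicitly needed anyway, and your convention handles it cleanly.
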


\subsection{Completion of the proof of Theorem \ref{thm}}
From \eqref{eqp6}, the possible values of $v$ are $p^mq^n, p^{m_1}q^n$ and $p^{m_1}q^{n_1}$ for $0\leq m_1\leq m,~~~ 0\leq n_1\leq n-1$. We will handle these possibilities individually.
\vspace{2mm}\\
$\blacksquare$ When $v=p^mq^n$. In this case \eqref{eqp6} gives $|\mathcal{L}(\alpha, \beta)|=1$, and  thus $\mathcal{L}_p(\alpha, \beta)$ has no primitive divisor. Therefore by Theorem \ref{thmBH} and Lemmas \ref{lemVO} and \ref{lemBH}, there is no Lehmer number $\mathcal{L}_p(\alpha, \beta)$ except for $p=3,5,7,13$. From Lemma \ref{lemPD}, $(u^2c, -v^2)$ is the parameter corresponds to the pair $(\alpha, \beta)$, so that by Lemma \ref{lemVO}, $p=7,13$ are not possible. This ensures that \eqref{eqn1} has no positive integer solution, except for $p=3,5$. 

We now consider  $p=5$ and in this case  Lemma \ref{lemBH} gives 
\begin{equation}\label{eqq51}
(u^2d,v^2)=(F_{k-2\varepsilon}, 4F_k-F_{k-2\varepsilon}),~~k\geq 3, 
\end{equation}
and 
\begin{equation}\label{eqq52}
(u^2d,v^2)=(L_{k-2\varepsilon}, 4L_k-L_{k-2\varepsilon}), ~~k\ne 1,
\end{equation}
where $\varepsilon=\pm1$. We utilize Lemma \ref{lemfl} in \eqref{eqq51} to get $L_{k+\varepsilon}=v^2$ with $k\geq 3$, which further implies (using Theorem \ref{thmCO}) that $(k,v,\varepsilon)=(4,2,-1)$. This is not possible as $v$ is odd.  
Again utilizing Lemma \ref{lemfl} in \eqref{eqq52}, we get $5F_{k+\varepsilon}=v^2$. As $v=5^mq^n$, so that $F_{k+\varepsilon}=5(5^{m-1}q^n)^2$ and thus by Lemma \ref{lemKK}, one gets $5^{m-1}q^n=1$ which is not possible as $n\geq 1$. 

For $p=3$, we have $v=3^mq^n$ and thus equating the imaginary parts in \eqref{eqp2}, we get 
$$\lambda_1\lambda_2(3u^3d-3^{2m}q^{2n})=4.$$
Since $m\geq 1$, so that the above equation has no integers solution.

$\blacksquare$ $v=p^{m_1}q^n~~(0\leq m_1\leq m-1)$.  In this case, \eqref{eqp6} implies $|\mathcal{L}_p(\alpha, \beta)|=p^{m-m_1}$, and by Lemma \ref{lemPE}, $\mathcal{L}_q(\alpha, \beta)$ has no primitive divisor. In this case too, \eqref{eqp2} has no solution which follows mutatis mutandis as in the last case.   

$\blacksquare$ $v=p^{m_1}q^{n_1}$ with $0\leq m_1\leq m$ and $0\leq n_1\leq n-1$. In the case, $|\mathcal{L}_p(\alpha, \beta)|=p^{m-m_1}q^{n-n_1}$ and by Lemma \ref{lemPE}, $q$ is the only primitive divisor of $\mathcal{L}_p(\alpha, \beta)$. Therefore the previous method is not utilizable. To proceed further, we rewrite \eqref{eqp4} as follows (in this case):
\begin{equation}\label{eqx}
x=\frac{|u\mathcal{R}(d,u,p^{m_1}q^{n_1},p)|}{2^{p-1}}
\end{equation}
and
\begin{equation}\label{eqpq}
p^{m-m_1}q^{n-n_1}=\frac{\lambda_1\lambda_2\mathcal{I}(d,u,p^{m_1}q^{n_1},p)}{2^{p-1}}.
\end{equation}
Employing \eqref{eqi} in \eqref{eqpq}, we have $p^{m-m_1}q^{n-n_1}\equiv \lambda \left(p^{m_1}q^{n_1}\right)^{p-1}\pmod p$, where $\lambda=\pm 1$. As $p$ and $q$ are distinct primes, so that by Fermat's little theorem $p^{m-m_1}q^{n-n_1}\equiv \lambda p^{m_1(p-1)}\pmod p$ which ensures that $m>m_1>0$. Again utilizing, \eqref{eqi} in \eqref{eqpq}, we get $p^{m-m_1}q^{n-n_1}\equiv \dfrac{\lambda pu^{p-1}d^{(p-1)/2}}{2^{p-1}}\pmod{p^2}$. Since $\gcd(p, u)=1$, so that by Fermat's little theorem, we have $p^{m-m_1-1}q^{n-n_1}\equiv \lambda d^{(p-1)/2}\pmod{p}$. This is possible only when $m=m_1+1$ and thus \eqref{eqpq} can be written as 
$$pq^{n-n_1}=\frac{\lambda_1\lambda_2}{2^{p-1}}\left( pu^{p-1}d^{(p-1)/2}+p^{2m-1}q^{2n_1}T \right),$$ for some $T\in\mathbb{Z}$.
As $m\geq 1$, so that it implies 
\begin{equation}\label{eqf}
q^{n-n_1}=\frac{\lambda_1\lambda_2}{2^{p-1}}\left( u^{p-1}d^{(p-1)/2}+p^{2m-2}q^{2n_1}T \right).
\end{equation}
If $n_1\geq 1$, then \eqref{eqf} implies that  
$$2^{p-1}q^{n-n_1}\equiv\lambda_1\lambda_2 u^{p-1}d^{(p-1)/2}\pmod q.$$
Since $\gcd(q, ud)=1$, so that it ensures that $n=n_1$ which contradicts the fact that $n_1\leq n-1$. Therefore $n_1=0$, and hence \eqref{eqf} becomes 
\begin{equation}\label{eqf1}
2^{p-1}q^n=\lambda_1\lambda_2\left( u^{p-1}d^{(p-1)/2}+p^{2m-2}T \right).
\end{equation}
Since $p$ is odd prime with $p\nmid ud$ and $m-1=m_1>0$, so that reading \eqref{eqf1} modulo $p$ gives 
$q^n\equiv \pm 1\pmod p$. 
Thus we can conclude that $\eqref{eqn1}$ has no integer solutions provided $q^n\not\equiv \pm 1\pmod p$. While $q^n\equiv \pm 1\pmod p$, the possible integer solutions are given by \eqref{eqx} and \eqref{eqp3} with $m_1=m-1$ and $n_1=0$. This completes the proof. 
%%%%%%%%%%%%%%%%%%%%%%%
\section{Concluding Remark}
Here, we look into a generalized version of Theorem \ref{thm}. We mainly consider the following Diophantine equation:
\begin{equation}\label{eqc1}
dx^2+p^{2m}q^{2n}=4y^N,~~ ~~x,y,m,n, N\in\mathbb{N},  p\mid N,  \gcd(x,y)=1, \gcd(p, h(-d))=1,   
\end{equation}
where $d$ is a square-free positive integer, and $p$ and $q$ are distinct odd primes.
\begin{thm}\label{thmc} Let $d, p$ and $q$ be as in Theorem \ref{thm}.  Assume that $N$ is a positive integer satisfying $p\mid N$ and $\gcd(N,2h(-d))=1$. Then
\begin{itemize}
\item[(i)] if $d\equiv 1,2\pmod 4$, then \eqref{eqc1} has no integer solution;
\item[(ii)] if $d\equiv 3\pmod 4$, then  \eqref{eqc1} has no positive integer solution when $q^n\not\equiv \pm 1\pmod p$. Moreover when $q^n\equiv \pm 1\pmod p$, \eqref{eqc1} has solutions only if $2^{\frac{N}{p}-1}p^{m-1}= |\mathcal{I}(d,u',1,N/p)|$ with $N/p$ prime and $u'\in\mathbb{N}$, and such solutions (if exist) are given by 
$$(x,y)=\left(\frac{|u\mathcal{R}(d,u,p^{m-1},p)|}{2^{p-1}},  \frac{u'^2d+p^{2\delta(m-1)}}{4}\right),$$
where $u=\begin{cases}
\dfrac{|u'\mathcal{R}(d,u',p^{m-1},N/p)|}{2^{\frac{N}{p}-1}}, \text{ if } \delta=0;\\
u', \text{ if } \delta=1;
\end{cases}$\\
with $u'$ a suitable positive integer.
\end{itemize}
\end{thm}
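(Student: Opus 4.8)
The plan is to descend from \eqref{eqc1} to \eqref{eqn1}, invoke Theorem~\ref{thm}, and then run the primitive divisor machinery one level further down. Part~(i) is the same parity argument as in Theorem~\ref{thm}: reducing \eqref{eqc1} modulo $4$ forces $x$ and $d$ to be odd, and then $dx^2\equiv d\equiv 3\pmod 4$; so for $d\equiv 1,2\pmod 4$ there is no solution.

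For part~(ii), assume $d\equiv 3\pmod 4$, put $N'=N/p$ and $Y=y^{N'}$. Since $p\mid N$, \eqref{eqc1} becomes $dx^2+p^{2m}q^{2n}=4Y^p$ with $\gcd(x,Y)=1$, i.e.\ exactly \eqref{eqn1} with $Y$ in place of $y$. As $p\nmid h(-d)$, Theorem~\ref{thm} applies: there is no solution when $q^n\not\equiv\pm1\pmod p$, and otherwise there is a positive integer $u$, odd and coprime to $pd$ by the proof of Theorem~\ref{thm}, with $x=|u\mathcal{R}(d,u,p^{m-1},p)|/2^{p-1}$ and $Y=(u^2d+p^{2(m-1)})/4$. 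Substituting $Y=y^{N'}$ produces the descended equation
$$du^2+(p^{m-1})^2=4y^{N'},\qquad \gcd(u,y)=1,\ \gcd(du,p^{m-1})=1.$$
Because $\gcd(N',2h(-d))=1$ (so $N'$ is odd with $\gcd(N',h(-d))=1$), Lemma~\ref{lemYU} applies with $k=N'$: there are odd $u',w\in\mathbb{N}$ with $\gcd(du',w)=1$, $4y=du'^2+w^2$, and $\tfrac12(u\sqrt d+p^{m-1}\sqrt{-1})=\mu_1\big(\tfrac12(u'\sqrt d+\mu_2 w\sqrt{-1})\big)^{N'}$ for suitable units $\mu_1,\mu_2$ (the extra unit factor for $d=3$ is absorbed into the $N'$-th power when $3\nmid N'$, the residual sub-cases being handled as in Lemma~\ref{lemYU}). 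Equating real and imaginary parts gives $u=\mu_1 u'\mathcal{R}(d,u',w,N')/2^{N'-1}$ and $p^{m-1}=\mu_1\mu_2 w\,\mathcal{I}(d,u',w,N')/2^{N'-1}$, and since $p^{m-1}$ is odd this forces $w\mid p^{m-1}$, so $w=p^{j}$ with $0\le j\le m-1$.

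Now form the Lehmer pair $(\alpha',\beta')=\big(\tfrac12(u'\sqrt d\pm\mu_2 w\sqrt{-1})\big)$; by Lemma~\ref{lemPD} its parameter is $(u'^2 d,-w^2)$, and $|\mathcal{L}_{N'}(\alpha',\beta')|=p^{m-1}/w=p^{m-1-j}$, so $p$ is its only possible prime divisor. If $j=m-1$, this Lehmer number is $\pm1$ and has no primitive divisor, so Theorem~\ref{thmBH} gives $N'\le 30$ and Lemmas~\ref{lemVO}, \ref{lemBH} (with Theorem~\ref{thmCO} and Lemmas~\ref{lemKK}, \ref{lemfl}, using $d$ squarefree and $d\equiv 3\pmod 4$) show the rigid parameter $(u'^2 d,-p^{2(m-1)})$ can occur only for $N'=1$, which returns the $\delta=1$ solution $u=u'$, $y=(u'^2 d+p^{2(m-1)})/4$. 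If $j\le m-2$ (so $m\ge2$), then $\mathcal{I}(d,u',p^{j},N')=\pm2^{N'-1}p^{m-1-j}$ is divisible by $p$, whereas \eqref{eqi} gives $\mathcal{I}(d,u',p^{j},N')\equiv N'u'^{N'-1}d^{(N'-1)/2}\pmod{p^{2j}}$; since $p\nmid u'd$, this forces $j=0$ (the residual possibility $p\mid N'$ being killed by a congruence to a higher power of $p$, exactly as in the proof of Theorem~\ref{thm}), so $w=1$, $\delta=0$, and $2^{N'-1}p^{m-1}=|\mathcal{I}(d,u',1,N')|$. Finally one shows $N'$ is prime: if $r\mid N'$ is prime with $r<N'$, then $\mathcal{L}_r(\alpha',\beta')\mid\mathcal{L}_{N'}(\alpha',\beta')=\pm p^{m-1}$, so $\mathcal{L}_r(\alpha',\beta')=\pm p^{c}$; if $c\ge1$ then $p$ is not primitive for $\mathcal{L}_{N'}$, so $\mathcal{L}_{N'}$ has no primitive divisor at all, whence $N'\le30$ by Theorem~\ref{thmBH} and the composite values are cleared by Lemmas~\ref{lemVO}, \ref{lemBH} against the parameter shape; if $c=0$ then $\mathcal{L}_r(\alpha',\beta')=\pm1$, so $r\le30$ and the parameter analysis pins $y$ to a bounded value, making $|\mathcal{L}_{N'}(\alpha',\beta')|=p^{m-1}$ with $m\ge2$ impossible. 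Collecting the surviving case yields the stated formulas, with $u=\mu_1 u'\mathcal{R}(d,u',w,N')/2^{N'-1}$ giving the asserted expression for $u$ and $x=|u\mathcal{R}(d,u,p^{m-1},p)|/2^{p-1}$ coming from the first-level descent.

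The hard part will be the bookkeeping just sketched: one must track the exact power of $p$ produced at each level of the descent — this is what fixes $w=p^j$ and then $j\in\{0,m-1\}$ — by reading the congruences \eqref{eqr} and \eqref{eqi} modulo several powers of $p$, and one must apply the primitive divisor theorems twice, to $\mathcal{L}_{N'}(\alpha',\beta')$ and to $\mathcal{L}_r(\alpha',\beta')$ for a prime $r\mid N'$, each time checking every sporadic Lehmer pair of Lemmas~\ref{lemVO} and \ref{lemBH} against the rigid parameter shape $(u'^2 d,-p^{2t})$. None of these steps introduces a new idea beyond Theorem~\ref{thm}, but the case analysis is where the real work lies.
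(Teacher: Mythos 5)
Your skeleton is the same as the paper's: reduce \eqref{eqc1} to \eqref{eqn1} by setting $Y=y^{N/p}$, invoke Theorem~\ref{thm} to dispose of parts (i) and of (ii) when $q^n\not\equiv\pm1\pmod p$, and then study the descended equation $du^2+p^{2(m-1)}=4y^{N/p}$ coming from $Y=(u^2d+p^{2(m-1)})/4$. The one genuine divergence is in how that descended equation is finished off: the paper handles it in a single stroke by citing \cite[Theorem 2.1]{CHS21}, which is precisely the statement that solutions exist only when $N/p$ is prime with $2^{N/p-1}p^{m-1}=|\mathcal{I}(d,u',1,N/p)|$, together with the displayed formula for $(u,y)$; you instead re-derive that result from scratch by a second application of Lemma~\ref{lemYU} and the primitive-divisor machinery. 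Your re-derivation is the right idea and buys self-containment, but it is exactly where the loose ends sit: the elimination of the sub-case $p\mid (N/p)$ (which the hypotheses do not exclude), the proof that $N/p$ must be prime (your argument via $\mathcal{L}_r\mid\mathcal{L}_{N'}$ for a prime $r<N'$ is only gestured at), and the clearing of defective Lehmer pairs at composite indices $N'\le 30$ --- Lemmas~\ref{lemVO} and \ref{lemBH} as quoted in the paper cover only prime indices, so composite $N'$ such as $9,15,21,25,27$ need a separate appeal to the full tables of Bilu--Hanrot--Voutier --- all require the full strength of the cited theorem. If you are permitted to quote \cite[Theorem 2.1]{CHS21} as the paper does, your argument collapses to the paper's; if you insist on a self-contained proof, those three points must be written out in full.
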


\begin{proof}[{\bf Proof of Theorem \ref{thmc}}]
Let $(x,y,m,n, N)$ be a positive integer solution of \eqref{eqc1} for a given square integer $d>1$ and a pair of distinct odd primes $p, q$. Then as in the proof of Theorem \ref{thm}, $x$ is odd, $d\equiv 3\pmod 4$ and $\gcd(pq, dx)=1$. 

As $p\mid N$, so that  \eqref{eqc1} can be written as
\begin{equation}\label{eqc2}
dx^2+p^{2m}q^{2n}=4Y^p 
\end{equation}
by putting $Y=y^{N/p}$. By Theorem \ref{thm}, \eqref{eqc2} has no solution in positive integers provided $q^n\not\equiv \pm 1\pmod p$, and hence \eqref{eqc1} too has no solution in positive integers.

We now consider $q^n\equiv \pm 1\pmod p$. Then by Theorem \ref{thm},  the possible solutions of \eqref{eqc2} are given by 
\begin{equation}\label{eqc3}
(x,Y)=\left(\frac{|u\mathcal{R}(d,u,p^{m-1},p)|}{2^{p-1}},  \frac{u^2d+p^{2(m-1)}}{4}\right).
\end{equation}

If $N/p>1$, then we can write $N=pt$ for some odd integer $t\geq 3$ and thus from \eqref{eqc3}, one gets 
\begin{equation}\label{eqc4}
du^2+p^{2(m-1)}=4y^t, ~~u\geq 1, y>1, \gcd(u,y)=1, \gcd(t,2h(-d))=1.
\end{equation}
By \cite[Theorem 2.1]{CHS21}, if $t$ has a prime factor $\ell$ satisfying $2^{\ell-1}p^{m-1}\ne |\mathcal{I}(d,u',1,\ell)|$ for some suitable positive integer $u'$, then \eqref{eqc4} has no solution $(d, u, y, m, t)$ and thus \eqref{eqc1} too has no integer solution. Further, \eqref{eqc4} has  solutions only if $t$ is prime satisfy $2^{t-1}p^{m-1}= |\mathcal{I}(d,u',1,t)|$ and those are given by
$$(u, y)=\left(\frac{u'|\mathcal{R}(d,u',1,t)|}{2^{t-1}}, \frac{u'^2d+1}{4}\right).$$
Thus the corresponding solutions of \eqref{eqc1} are given by 
$$(x,y)=\left(\frac{|u\mathcal{R}(d,u,p^{m-1},p)|}{2^{p-1}}, \frac{u'^2d+1}{4}\right),$$
where $u=\dfrac{u'|\mathcal{R}(d,u',1,N/p)|}{2^{\frac{N}{p}-1}}$ and $u'\geq 1$ is a suitable integer. 

Finally for $N/p=1$, one can conclude that the solutions of \eqref{eqc1} are as follows:
$$(x,y)=\left(\frac{|u\mathcal{R}(d,u,p^{m-1},p)|}{2^{p-1}},  \frac{u^2d+p^{2(m-1)}}{4}\right),$$
where $u\geq 1$ is an odd integer.
 This completes the proof. 
\end{proof}

\section*{acknowledgements}
This work is supported by the grants SERB MATRICS Project No. MTR/2017/001006 and SERB-NPDF (PDF/2017/001958), Govt. of India. The authors are grateful to the anonymous referee for careful reading and valuable comments which have helped to improve this paper. 
%\section*{Conflicts of interest}
%The authors declare  that there is no conflict of interest.

\end{document}